\documentclass[a4paper]{amsart}

\usepackage{amssymb,pstricks,amscd,epsfig}
\usepackage[T1]{fontenc}
\usepackage[totalwidth=16cm,totalheight=22cm]{geometry}
\usepackage{graphicx}

\sloppy

\begin{document}
\newtheorem{cor}{Corollary}[section]
\newtheorem{theorem}[cor]{Theorem}
\newtheorem{prop}[cor]{Proposition}
\newtheorem{lemma}[cor]{Lemma}
\newtheorem{sublemma}[cor]{Sublemma}
\theoremstyle{definition}
\newtheorem{defi}[cor]{Definition}
\theoremstyle{remark}
\newtheorem{remark}[cor]{Remark}
\newtheorem{example}[cor]{Example}

\newcommand{\cA}{{\mathcal A}}
\newcommand{\cD}{{\mathcal D}}
\newcommand{\cE}{{\mathcal E}}
\newcommand{\cF}{{\mathcal F}}
\newcommand{\cG}{{\mathcal G}}
\newcommand{\cM}{{\mathcal M}}
\newcommand{\cN}{{\mathcal N}}
\newcommand{\cT}{{\mathcal T}}
\newcommand{\cW}{{\mathcal W}}
\newcommand{\cML}{{\mathcal M\mathcal L}}
\newcommand{\cFML}{{\mathcal F\mathcal M\mathcal L}}
\newcommand{\cGH}{{\mathcal G\mathcal H}}
\newcommand{\cQF}{{\mathcal Q\mathcal F}}
\newcommand{\dwp}{d_{WP}}
\newcommand{\C}{{\mathbb C}}
\newcommand{\N}{{\mathbb N}}
\newcommand{\R}{{\mathbb R}}
\newcommand{\Z}{{\mathbb Z}}
\newcommand{\Kt}{\tilde{K}}
\newcommand{\Mt}{\tilde{M}}
\newcommand{\dr}{{\partial}}
\newcommand{\betab}{\overline{\beta}}
\newcommand{\kappab}{\overline{\kappa}}
\newcommand{\pib}{\overline{\pi}}
\newcommand{\taub}{\overline{\tau}}
\newcommand{\gb}{\overline{g}}
\newcommand{\hb}{\overline{h}}
\newcommand{\ub}{\overline{u}}
\newcommand{\Kb}{\overline{K}}
\newcommand{\Sigmab}{\overline{\Sigma}}
\newcommand{\gd}{\dot{g}}
\newcommand{\Id}{\dot{I}}
\newcommand{\diff}{\mbox{Diff}}
\newcommand{\dev}{\mbox{dev}}
\newcommand{\devb}{\overline{\mbox{dev}}}
\newcommand{\devt}{\tilde{\mbox{dev}}}
\newcommand{\vol}{\mbox{Vol}}
\newcommand{\hess}{\mbox{Hess}}
\newcommand{\cb}{\overline{c}}
\newcommand{\db}{\overline{\partial}}
\newcommand{\hgr}{h_{gr}}
\newcommand{\Sigmat}{\tilde{\Sigma}}

\newcommand{\cunc}{{\mathcal C}^\infty_c}
\newcommand{\cun}{{\mathcal C}^\infty}
\newcommand{\dd}{d_D}
\newcommand{\dmin}{d_{\mathrm{min}}}
\newcommand{\dmax}{d_{\mathrm{max}}}
\newcommand{\Dom}{\mathrm{Dom}}
\newcommand{\dn}{d_\nabla}
\newcommand{\ded}{\delta_D}
\newcommand{\delmin}{\delta_{\mathrm{min}}}
\newcommand{\delmax}{\delta_{\mathrm{max}}}
\newcommand{\hmin}{H_{\mathrm{min}}}
\newcommand{\maxi}{\mathrm{max}}
\newcommand{\oL}{\overline{L}}
\newcommand{\oP}{{\overline{P}}}
\newcommand{\xb}{{\overline{x}}}
\newcommand{\yb}{{\overline{y}}}
\newcommand{\Ran}{\mathrm{Ran}}
\newcommand{\tgamma}{\tilde{\gamma}}
\newcommand{\cotan}{\mbox{cotan}}
\newcommand{\area}{\mbox{Area}}
\newcommand{\lambdat}{\tilde\lambda}
\newcommand{\xt}{\tilde x}
\newcommand{\Ct}{\tilde C}
\newcommand{\St}{\tilde S}

\newcommand{\sh}{\mathrm{sinh}\,}
\newcommand{\ch}{\mathrm{cosh}\,}
\newcommand{\tr}{\mathrm{tr}\,}

\newcommand{\II}{I\hspace{-0.1cm}I}
\newcommand{\III}{I\hspace{-0.1cm}I\hspace{-0.1cm}I}
\newcommand{\note}[1]{{\small {\color[rgb]{1,0,0} #1}}}

\title{The renormalized volume and the volume of the convex core of quasifuchsian manifolds}

\author{Jean-Marc Schlenker}
\address{University of Luxembourg, Campus Kirchberg,
Mathematics Research Unit, BLG,
6, rue Richard Coudenhove-Kalergi, 
L-1359 Luxembourg}
\email{jean-marc.schlenker@uni.lu}

\date{Revised version --- November 2014}

\begin{abstract}
We show that the renormalized volume of a quasifuchsian hyperbolic 3-manifold is equal, up
to an additive constant, to the volume of its convex core. 
We also provide a precise upper bound on the renormalized volume in terms of the
Weil-Petersson distance between the conformal structures at infinity.
As a consequence we show that holomorphic disks in Teichm\"uller space 
which are large enough must have ``enough'' negative curvature.
\end{abstract}

\maketitle

\section{Results}

\subsection{Notations}

In all the paper we consider a closed surface $S$ of genus 
$g\geq 2$, and we call $\cT_S$ the Teichm\"uller space of $S$.
Given a complex structure $c\in \cT_S$, we denote by $Q_c$ the vector space of
holomorphic quadratic differentials on $(S,c)$.
We also call $\cG_S$ the space of quasifuchsian metrics on 
$S\times \R$, considered up to isotopy.

The Bers Simultaneous Uniformization
Theorem provides a homeomorphism between $\cT_{S\cup \overline{S}}$
and $\cG_S$. So $\cG_S$
is parameterized by $\cT_+\times \cT_-$, where
$\cT_+$ and $\cT_-$ are two copies of $\cT_S$ corresponding
respectively to the upper and lower boundaries at infinity
of $S\times \R$.

For $q\in \cG$, we denote by $V_R(q)$ the renormalized volume of $(S\times \R,g)$ (as defined
in Section 3 following e.g. \cite{review}), while 
$C(q)$ is the convex core of $(S\times \R,g)$ and $V_C(q)$ is its volume.

\subsection{Comparing the renormalized volume to the 
volume of the convex core}

The first result presented here is a precise comparison between the volume of the
convex core and the renormalized volume of a quasifuchsian hyperbolic manifold.

\begin{theorem} \label{tm:compare}
There exists a constant $C_g>0$, depending only on the genus $g$ of $S$, as follows. 
Let $q\in \cG_{S}$ be a quasifuchsian metric. Then
$$ V_R(q) \leq V_C(q) - (1/4)L_m(l) + \frac{\pi \log(2)}2|\chi(\partial M)|
\leq V_R(q)+C_g~, $$
where $l$ is the measured bending lamination of the boundary of $C(q)$,
$m$ is its induced metric, and $L_m(l)$ is the length of $l$ with respect
to $m$.
Equality in the first inequality occurs exactly when $q$ is Fuchsian.
\end{theorem}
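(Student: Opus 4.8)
The plan is to work throughout with the W-volume functional $W(N)=\vol(N)-\frac12\int_{\partial N}H\,da$ attached to a compact convex subset $N$ with smooth boundary, where $H$ is the mean curvature of $\partial N$ and $da$ its area element. The first step is an equidistant flow formula: pushing $\partial N$ outward a distance $r$ along the normal and using the standard evolution $\kappa_i(r)=(\sinh r+\kappa_i\cosh r)/(\cosh r+\kappa_i\sinh r)$ of the principal curvatures, together with the scaling of the area element and the Gauss equation $\kappa_1\kappa_2=1+K_{\partial N}$, one computes $\frac{d}{dr}W(N_r)=-\frac12\int_{N_r}K=-\pi\chi(\partial N)$. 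Thus $W$ grows toward infinity at a purely topological rate, which is exactly what makes it the natural object to renormalize, and which matches the definition of $V_R$ from Section 3: $V_R$ is the value of this renormalized $W$ once the equidistant foliation near infinity is normalized so that the induced representative of the conformal structure at infinity is its hyperbolic (curvature $-1$) metric $h$.

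Next I would evaluate $W$ on the convex core $C(q)$ itself. Since $\partial C(q)$ is pleated rather than smooth, I would approximate it by smooth convex surfaces (or work with the distributional second fundamental form): along a bending line of length $\ell$ and exterior angle $\theta$ the mean curvature concentrates, and the smoothing computation gives $\int H\,da\to\frac12\,\ell\theta$, so that globally $\frac12\int_{\partial C}H\,da\to\frac14 L_m(l)$. This yields the identity $W(C(q))=V_C(q)-\frac14 L_m(l)$, which is precisely the middle quantity in the statement. Moreover the equidistant foliation issuing from $\partial C(q)$ realizes the Thurston grafted metric $\mathrm{gr}_l(m)$ as its representative at infinity, whose conformal class is the one at infinity; hence the renormalization attached to this particular foliation returns exactly $W(C(q))$.

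The two inequalities then reduce to comparing the renormalization taken with the grafted representative $\mathrm{gr}_l(m)$ against the one taken with the hyperbolic representative $h$, i.e. to the conformal-change (Liouville-type) formula for $W$ recorded in Section 3. Writing $\mathrm{gr}_l(m)=e^{2u}h$, this formula expresses $W(C(q))-V_R$ as a Dirichlet/Liouville-type action in $u$ whose sign I must pin down: $h$ is the extremum of this action, and the required direction gives $V_R\le W(C(q))$, with equality exactly when $u\equiv0$, that is when $\mathrm{gr}_l(m)$ is already hyperbolic, hence $l=0$, hence $q$ is Fuchsian. For the upper bound $W(C(q))-V_R\le C_g$ I would estimate this same action from above: because the total curvatures of both representatives equal the topological constant $2\pi\chi(S)$ and the area of a hyperbolic metric of genus $g$ is fixed, the action is controlled by a constant depending only on $g$.

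The main obstacle I anticipate is twofold and lies in the middle two steps. First, making the pleated-boundary computation rigorous — controlling $W$ under smoothing of $\partial C(q)$ and identifying the limit of $\frac12\int H\,da$ with $\frac14 L_m(l)$ uniformly in the measured bending lamination — requires care, since the bending measure need not be atomic and the naive ridge picture must be replaced by an approximation valid for a general lamination. Second, and more seriously, I must establish both the \emph{sign} and the genus-only bound of the Liouville-type correction relating the grafted and hyperbolic representatives: the sign (the extremal/convexity property of $h$) is what forces strict inequality off the Fuchsian locus, and the $C_g$ estimate is what keeps $V_C-\frac14 L_m(l)$ within bounded distance of $V_R$. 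By contrast the remaining steps — the flow formula and the identification of $V_R$ with the hyperbolic-representative renormalization — should be comparatively routine within the framework of Section 3.
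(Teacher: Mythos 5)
Your overall skeleton matches the paper's: both work with the $W$-volume, both rest on the identity $W(M,\hgr)=V_C(q)-\tfrac14 L_m(l)$ (your smoothing argument for the pleated boundary, plus the fact that the equidistant foliation issuing from $\partial C(q)$ has the Thurston/grafting metric as its metric at infinity, is exactly the content of the first lemma of Section 4), and both reduce the theorem to comparing the grafted representative $\hgr$ with the hyperbolic representative $h_0$ of the conformal class at infinity. But the mechanism you propose for that comparison has a genuine gap, in both directions. For the first inequality you want to deduce $V_R\le W(M,\hgr)$ from ``$h_0$ is the extremum of the Liouville action.'' That cannot work: $h_0$ is extremal (indeed maximal) for $W$ only among metrics of \emph{fixed area} $-2\pi\chi(\dr M)$, whereas $\hgr$ has area $-2\pi\chi(\dr M)+L_m(l)$; and where extremality does apply (after rescaling to the correct area), it gives $W\le V_R$ — the opposite-facing inequality, which is the one needed for the \emph{second} bound. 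The paper's proof of $V_R\le W(M,\hgr)$ uses two ingredients you never invoke: the maximum-principle comparison of Lemma \ref{lm:conf-compare} (since $\hgr$ has curvature $\ge -1$, writing $\hgr=e^{2u}h_0$ forces $u\ge 0$, i.e. $\hgr\ge h_0$ pointwise), and the monotonicity of $W$ under pointwise enlargement of the metric at infinity, Proposition \ref{pr:conf-change}, which is proved not from any Liouville formula but by flowing between the two equidistant surfaces and using positivity of $\int \Delta u + 2u + u(k_1-k_2)^2\,da$ for $u\ge 0$. The equality case (Fuchsian $\Leftrightarrow$ equality) also comes from the rigidity in that monotonicity statement, so your treatment of it inherits the same gap.

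For the second inequality, your claim that the correction is bounded by a genus-only constant ``because total curvatures are topological and the area of a hyperbolic metric of genus $g$ is fixed'' is false as stated: the area of $\hgr$ is $-2\pi\chi(\dr M)+L_m(l)$, and nothing in your argument bounds $L_m(l)$ in terms of the genus, so the rescaling/Liouville correction, which is of order $|\chi(\dr M)|\log\bigl(1+L_m(l)/|2\pi\chi(\dr M)|\bigr)$, is a priori unbounded and no constant $C_g$ emerges. The indispensable missing input is Bridgeman's theorem that $L_m(l)\le K_g$ for manifolds with incompressible boundary. The paper's chain is: rescale $\hgr$ by a constant to area $-2\pi\chi(\dr M)$, use the characterization of $V_R$ as the maximum of $W$ over fixed-area conformal metrics to get $W(M,\hgr')\le V_R$, compute the rescaling cost by Corollary \ref{cr:add}, and then bound that cost by Bridgeman's estimate. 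You have the first and third of these steps implicitly, but without Bridgeman the argument does not close. (A minor point: with the paper's normalization the equidistant flow gives $dW/dr=-2\pi\chi(\dr M)$, Lemma \ref{lm:linear}, not $-\pi\chi(\dr M)$.)
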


Note that the quantity $V_C(q)-(1/4)L_m(l)$ appearing in this theorem 
can be interpreted as the half-sum of the volume and the ``dual volume''
of the convex core (as appearing e.g. in \cite{cp}). Recall also that 
there is a bound on $L_m(l)$ depending only on the genus $g$, see \cite{bridgeman}.
The constant $\frac{\pi \log(2)}2|\chi(\partial M)|$ appearing in the middle term
is related to normalization issues in the definition of the renormalized volume,
and could be suppressed by a suitable normalization.

Theorem \ref{tm:compare}
can be extended to convex co-compact manifolds with incompressible boundary.
We do not elaborate on this here but the statement and proof of the extension
should appear clearly from the proof of Theorem \ref{tm:compare} below.

\subsection{An upper bound on the renormalized volume}

The next result is a precise upper bound on the renormalized volume of
a quasifuchsian manifold, in terms of the Weil-Petersson distance between
its conformal structures at infinity.

\begin{theorem} \label{tm:upper}
For any quasifuchsian metric $q$ on $S\times \R$,
\begin{equation}
V_R(q) \leq 3\sqrt{\pi (g-1)} \dwp(c_-,c_+)~, 
\end{equation}
where $c_-$ and $c_+$ are the conformal structure at infinity of $q$ and 
$d_{WP}$ is the Weil-Petersson distance.
\end{theorem}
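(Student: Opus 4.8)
The plan is to integrate the first variation of $V_R$ along a Weil--Petersson geodesic joining the two conformal structures at infinity, using that $V_R$ vanishes on the Fuchsian locus and that the Weil--Petersson gradient of $V_R$ is the Schwarzian derivative at infinity, whose norm is controlled by Nehari's inequality. The constant $3\sqrt{\pi(g-1)}$ is precisely the product of the pointwise Nehari bound $3/2$ with the square root of the hyperbolic area $4\pi(g-1)$ given by Gauss--Bonnet, which is what makes this the natural strategy.

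First I would record two facts about $V_R$ coming from the renormalized volume theory of Section 3 (see also \cite{review}). (i) If $q$ is Fuchsian then $V_R(q)=0$; this is the normalization of the renormalized volume, consistent with the equality case of Theorem \ref{tm:compare}. (ii) Writing $q=(c_+,c_-)\in\cT_+\times\cT_-$, the differential of $V_R$ in the directions tangent to the $\cT_+$ factor is obtained by pairing with the Schwarzian $\Phi_+\in Q_{c_+}$ at the upper end: $dV_R(\mu,0)=\mathrm{Re}\langle\Phi_+,\mu\rangle$ for a Beltrami differential $\mu$ representing a variation of $c_+$, where $\Phi_+$ is the quadratic differential measuring the Schwarzian of the developing map of the projective structure at infinity relative to the Fuchsian uniformization of $(S,c_+)$. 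Under the Weil--Petersson identification of $Q_{c_+}$ with the tangent space this says that the $\cT_+$-gradient satisfies $\|\nabla_+V_R\|_{WP}=\|\Phi_+\|_{WP}$, and symmetrically for the lower end.

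Next I would bound $\|\Phi_+\|_{WP}$. The upper component $\Omega_+$ of the domain of discontinuity of the quasifuchsian group is simply connected, so the conformal map uniformizing $(S,c_+)$ by $\Omega_+$ is univalent; Nehari's inequality then gives the pointwise estimate $\|\Phi_+\|_{\mathrm{hyp}}\le 3/2$. Combining with Gauss--Bonnet, which gives hyperbolic area $4\pi(g-1)$, yields
\[
\|\Phi_+\|_{WP}^2=\int_S\|\Phi_+\|_{\mathrm{hyp}}^2\,dA_{\mathrm{hyp}}\le\left(\frac32\right)^2\cdot 4\pi(g-1)=9\pi(g-1),
\]
so that $\|\nabla_+V_R\|_{WP}\le 3\sqrt{\pi(g-1)}$ at every quasifuchsian $q$.

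Finally I would integrate. Let $c:[0,1]\to\cT_S$ be a minimizing Weil--Petersson geodesic with $c(0)=c_-$ and $c(1)=c_+$, and consider the path $\gamma(t)=(c(t),c_-)$ in $\cG_S$, which starts at the Fuchsian manifold $(c_-,c_-)$ and ends at $q$. Since $\dot\gamma(t)=(\dot c(t),0)$ is tangent to the $\cT_+$ factor, the chain rule together with fact (i) and Cauchy--Schwarz give
\[
V_R(q)=V_R(\gamma(1))-V_R(\gamma(0))=\int_0^1\langle\nabla_+V_R,\dot c(t)\rangle_{WP}\,dt\le 3\sqrt{\pi(g-1)}\int_0^1\|\dot c(t)\|_{WP}\,dt,
\]
and the last integral is exactly $\dwp(c_-,c_+)$, which is the asserted bound. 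I expect the main obstacle to be establishing the first variation formula (ii) with the correct normalization, that is, identifying $\nabla_+V_R$ precisely with the Schwarzian $\Phi_+$; this rests on the asymptotic expansion of the metric near infinity and the identification of the trace-free second fundamental form at infinity with the Schwarzian derivative, and it is exactly the step where the geometry of the renormalized volume enters and must be carried out carefully in Section 3 for the clean constant to emerge. Differentiability of $V_R$ away from the Fuchsian locus is a further, but standard, technical point.
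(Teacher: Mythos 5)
Your proof is correct and is essentially the paper's own argument: both integrate the first variation of $V_R$ (whose differential pairs the variation of the conformal structure with the Schwarzian at infinity) along a Weil--Petersson geodesic starting at the Fuchsian point $(c_-,c_-)$, and bound the resulting gradient term by combining Nehari's inequality for univalent maps with the Gauss--Bonnet area $4\pi(g-1)$, finishing with Cauchy--Schwarz. The only difference is normalization bookkeeping: the paper states Nehari's bound with constant $6$ (so the Bers image lies in the WP ball of radius $12\sqrt{\pi(g-1)}$) and carries a factor $1/4$ in its variational formula, whereas you use the curvature $-1$ convention (Nehari constant $3/2$) with no $1/4$ --- the two conventions compensate and give the same constant $3\sqrt{\pi(g-1)}$, and you rightly flag this normalization as the point requiring care.
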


This result is related, through Theorem \ref{tm:compare}, to the following result of 
Brock on the comparison between the Weil-Petersson distance $\dwp(c_-,c_+)$ and 
the volume of the convex core $V_C(q)$.

\begin{theorem}[Brock \cite{brock:2003}] \label{tm:brock}
For a given surface $S$, $V_C(c_-,c_+)$ is comparable to $\dwp(c_-,c_+)$,
that is, there are constants $k_1,k_2>0$ such that 
\begin{equation} \label{eq:brock}
\frac{\dwp(c_-,c_+)}{k_1}-k_2 \leq V_C(c_-,c_+)\leq k_1 \dwp(c_-,c_+)+k_2~. 
\end{equation}
\end{theorem}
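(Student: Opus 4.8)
The plan is to pass through the combinatorics of the pants graph and reduce the comparison to a coarse statement about a combinatorial distance. Let $\mathcal{P}(S)$ denote the pants graph of $S$: its vertices are isotopy classes of pants decompositions, and two decompositions are joined by an edge when they differ by a single elementary move. The starting point is the quasi-isometry between $(\cT_S,\dwp)$ and $\mathcal{P}(S)$, which lets me replace $\dwp(c_-,c_+)$, up to uniform multiplicative and additive constants, by the combinatorial distance $d_{\mathcal{P}}(P_-,P_+)$, where $P_\pm$ is a pants decomposition consisting of curves that are short for $c_\pm$. So it suffices to prove that $V_C(c_-,c_+)$ is comparable to $d_{\mathcal{P}}(P_-,P_+)$, and the two inequalities in \eqref{eq:brock} are then established separately.

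For the upper bound $V_C\leq k_1\dwp+k_2$, I would realize a geodesic $P_-=Q_0,Q_1,\dots,Q_N=P_+$ in $\mathcal{P}(S)$ geometrically inside the convex core. Each $Q_i$ can be spanned by a pleated (or simplicial hyperbolic) surface $f_i\colon S\to C(q)$ whose bending/pleating locus contains the curves of $Q_i$, and two consecutive surfaces $f_i,f_{i+1}$ differ by a single elementary move, so the region between them has volume bounded by a universal constant (this is where the uniformly bounded length of the curves involved is used). Stacking these surfaces sweeps out the convex core from one boundary component to the other, and the total volume is at most $N$ times a constant, giving $V_C\lesssim d_{\mathcal{P}}(P_-,P_+)+O(1)\lesssim\dwp(c_-,c_+)+O(1)$.

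For the lower bound $\dwp/k_1-k_2\leq V_C$, I would argue conversely that a convex core of large volume must require many elementary moves to connect its two ends. Sweeping the convex core by a continuous family of simplicial hyperbolic surfaces and recording, at each time, a bounded-length pants decomposition, one obtains a path in $\mathcal{P}(S)$ from $P_-$ to $P_+$ whose length is controlled by the volume: when the images of two such surfaces lie a uniformly bounded distance apart, the associated pants decompositions are at bounded combinatorial distance, and a region of volume $V$ can be crossed using $O(V)$ steps because the injectivity radius is bounded below on the thick part of the core. This yields $d_{\mathcal{P}}(P_-,P_+)\lesssim V_C+O(1)$, hence $\dwp(c_-,c_+)\lesssim V_C+O(1)$.

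The main obstacle is the geometric realization underlying both directions: producing the interpolating families of pleated or simplicial hyperbolic surfaces and establishing the uniform a priori geometric bounds -- lower bounds on injectivity radius in the thick part, bounded diameter of the thick part of each spanning surface, and the bounded volume contributed by a single elementary move -- that make the two-sided estimate uniform in $c_\pm$. Controlling the geometry near the Margulis tubes of the short curves, and matching the combinatorial boundary data $P_\pm$ to the actual geometry at the two ends of the convex core, is the technically delicate core of the argument.
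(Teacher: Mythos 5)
First, a point about provenance: the paper does not prove Theorem \ref{tm:brock} at all --- it is quoted from Brock \cite{brock:2003}, and the paper's own contribution is the sharpened \emph{upper} bound of Corollary \ref{cor:brock}, $V_C \leq 3\sqrt{\pi(g-1)}\,\dwp(c_-,c_+) + K_g$, obtained by an entirely analytic route: the comparison $V_R \leq V_C - \tfrac14 L_m(l) \leq V_R + C_g$ (Theorem \ref{tm:compare}), the bound $V_R \leq 3\sqrt{\pi(g-1)}\,\dwp$ (Theorem \ref{tm:upper}, via the Bers embedding and Nehari's bound, Theorem \ref{tm:nehari}), and Bridgeman's bound on $L_m(l)$ \cite{bridgeman}. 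The lower bound $\dwp/k_1 - k_2 \leq V_C$ is nowhere reproven in the paper; for it one must rely on \cite{brock:2003}. So your combinatorial sketch via the pants graph is not ``the paper's route'' at all --- it is an outline of Brock's original argument, which is a legitimate thing to attempt, but it must then stand on its own.

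As such it has a genuine gap, concentrated in the upper-bound direction. You claim that pleated (or simplicial hyperbolic) surfaces $f_i$, $f_{i+1}$ realizing consecutive pants decompositions $Q_i$, $Q_{i+1}$ of a pants-graph geodesic cobound a region of uniformly bounded volume, ``this is where the uniformly bounded length of the curves involved is used.'' But no such length bound is available: only the endpoint decompositions $P_\pm$ are known to have bounded length in $M$ (by Bers' constant applied to the boundary of the convex core); the \emph{intermediate} vertices $Q_i$ of a geodesic in $\mathcal{P}(S)$ are abstract pants decompositions with no a priori control on their geodesic lengths in the fixed manifold $M$, so their realizing surfaces can be geometrically wild and the region between consecutive ones can a priori have large volume. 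Obtaining length bounds for the curves appearing in a combinatorial path joining the two ends is precisely the hard ``a priori bounds'' content of the ending-lamination machinery, and Brock's actual proof of this direction is structured to avoid it (working instead with geometric limits and volumes of maximal cusps associated to elementary moves, rather than realizing the given geodesic inside $C(q)$ by surfaces). Your lower-bound direction (sweep the core by simplicial hyperbolic surfaces, record bounded-length pants decompositions, and count elementary moves per unit volume) is much closer to the actual argument, but there too the key quantitative lemma --- that a region of volume $V$ can only support $O(V)$ distinct bounded-length curves, with the thin parts and Margulis tubes handled --- is asserted rather than proven. Finally, note that the quasi-isometry between $(\cT_S,\dwp)$ and the pants graph, which you take as your starting point, is itself the main preliminary theorem of the very paper \cite{brock:2003} being cited, so invoking it does not shortcut much of the real work.
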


We will recover a precise form of the upper bound on $V_C$ from Theorem \ref{tm:compare}
and Theorem \ref{tm:upper}. 
Recall that Bridgeman \cite[Proposition 2]{bridgeman} proved that the length
of the measured bending lamination, $L_m(l)$, is bounded from above by a 
constant $K_g$ depending only on the genus of $S$. The following statement then follows 
immediately from Theorem \ref{tm:upper} and Theorem \ref{tm:compare}.

\begin{cor} \label{cor:brock}
There exists a constant $K_g$ depending only on the genus of $S$ such that
$$ V_C(c_-,c_+) \leq 3\sqrt{\pi(g-1)} \dwp(c_-,c_+)+K_g~. $$
\end{cor}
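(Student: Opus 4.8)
The plan is to treat this purely as a combination of the two preceding theorems together with Bridgeman's genus-dependent bound on the bending lamination, with no new geometric input required. Since the quasifuchsian metric $q$ is parameterized by its pair of conformal structures at infinity $(c_-,c_+)\in\cT_-\times\cT_+$ via Bers' theorem, I identify $V_C(q)$ with $V_C(c_-,c_+)$ throughout, so the goal reduces to producing the displayed inequality for a fixed $q$ and then reading it as a statement about the pair $(c_-,c_+)$.

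First I would isolate the right-hand inequality of Theorem~\ref{tm:compare}, namely $V_C(q)-(1/4)L_m(l)\leq V_R(q)+C_g$, and rearrange it to
$$ V_C(q) \leq V_R(q) + C_g + (1/4)L_m(l)~. $$
Next I would substitute the upper bound of Theorem~\ref{tm:upper}, $V_R(q)\leq 3\sqrt{\pi(g-1)}\,\dwp(c_-,c_+)$, into the first term on the right, giving
$$ V_C(q) \leq 3\sqrt{\pi(g-1)}\,\dwp(c_-,c_+) + C_g + (1/4)L_m(l)~. $$
Finally I would invoke Bridgeman's bound, which asserts that $L_m(l)$ is at most a constant depending only on the genus of $S$; absorbing this constant together with $C_g$ into a single genus-dependent additive constant yields exactly the claimed estimate.

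The only point requiring care is the bookkeeping of the additive constants, and in particular the essential role of Bridgeman's bound: it is precisely the genus-only dependence of $L_m(l)$ that lets the term $(1/4)L_m(l)$ be swallowed into a uniform constant $K_g$, rather than leaving a quantity that varies with $q$. There is no genuine analytic obstacle at this stage, since all the difficulty has been isolated in Theorem~\ref{tm:compare} and Theorem~\ref{tm:upper}; the corollary is a formal consequence of chaining their inequalities and discarding the now-bounded bending term.
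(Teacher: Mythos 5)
Your proof is correct and follows exactly the paper's route: the paper likewise derives the corollary by chaining the right-hand inequality of Theorem~\ref{tm:compare} with the bound of Theorem~\ref{tm:upper}, and then absorbing Bridgeman's genus-only bound on $L_m(l)$ into the additive constant. Nothing is missing; your bookkeeping of the constants matches the paper's (implicit) argument.
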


This is a more precise version of the second inequality in (\ref{eq:brock}).

\subsection{Holomorphic disks in Teichm\"uller space}

An interesting difference between Theorem \ref{tm:upper} and Theorem \ref{tm:brock} is that the
renormalized volume has, in addition to the ``coarse'' properties of the volume of the convex
core, some remarkable analytic properties related to the Weil-Petersson 
metric on Teichm\"uller space. We will use this and the upper bound on $V_C$
to obtain a statement on the global geometry of the Weil-Petersson metric on Teichm\"uller
space, more precisely on the maximal radius of holomorphic disks with curvature bounded from
below (see Theorem \ref{tm:disk}). The statement below
is probably not optimal, it is more of an indication of the
type of results one can obtain using the remarkable properties of the renormalized
volume. 

\begin{theorem} \label{tm:disk}
There exists a smooth, increasing function $\phi:[0,1)\rightarrow \R_{\geq 0}$
with $\phi(0)=0$, $\phi'(0)=2$ and $\lim_1\phi=\infty$ as follows. 
Let $k>0$ be such that $3k^2\sqrt{\pi(g-1)}<2$.
There is no  immersed holomorphic disk $D$ of radius $\phi(3k^2\sqrt{\pi(g-1)}/2)/k$ 
in $\cT_S$ with curvature $K\geq -k^2$. 
\end{theorem}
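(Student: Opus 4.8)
The plan is to let the function $\phi$ encode a threshold radius coming from a comparison between the \emph{linear} upper bound of Theorem~\ref{tm:upper} and the \emph{superlinear} growth forced by the fact that the renormalized volume is a K\"ahler potential for the Weil--Petersson metric. First I would fix the center of the disk: let $f:D\to\cT_S$ be the immersion, where $D$ carries the induced metric $g=f^{*}g_{WP}$ (which is assumed to satisfy $K\geq -k^{2}$ and to have intrinsic radius $R$), normalized so that $0\in D$ is the center. Taking the lower conformal structure to be the fixed point $c_-=\overline{f(0)}$, consider $u:D\to\R$, $u(z)=V_R(c_-,f(z))$. Two facts drive the argument. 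On one hand, the analytic properties of the renormalized volume recalled in Section~3 give, for fixed $c_-$, that $c_+\mapsto V_R(c_-,c_+)$ is a K\"ahler potential for the Weil--Petersson metric, i.e. $i\partial\db V_R=a\,\omega_{WP}$ for a universal constant $a>0$; since $f$ is holomorphic this pulls back to $i\partial\db u=a\,\omega_g$, which in complex dimension one says exactly that $\Delta_g u$ equals the constant $2a$. On the other hand $u(0)=0$ because the manifold over the center is Fuchsian, and Theorem~\ref{tm:upper} together with the triangle inequality and the elementary bound $\dwp(f(0),f(z))\leq d_g(0,z)$ (lengths are preserved by $f$, so Weil--Petersson distance can only be smaller than the $g$-distance) yields the linear upper bound $u(z)\leq 3\sqrt{\pi(g-1)}\,d_g(0,z)\leq 3\sqrt{\pi(g-1)}\,R$.

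Next I would feed the lower curvature bound into a comparison argument. Let $\psi$ be the rotationally symmetric solution of $\Delta\psi=2a$ on the model plane of constant curvature $-k^{2}$ with $\psi(0)=\psi'(0)=0$; explicitly $\psi(r)=\frac{4a}{k^{2}}\log\cosh(kr/2)$, which is convex, increasing, asymptotically linear with slope $2a/k$, and quadratic ($\psi(r)\sim\tfrac{a}{2}r^{2}$) near $0$. Setting $h(x)=\psi(d_g(0,x))$, the Laplacian comparison theorem (applicable since $K\geq -k^{2}$ and $\psi'\geq 0$) gives $\Delta_g h\leq 2a=\Delta_g u$, so $u-h$ is subharmonic on $D$ and vanishes at the center, the mild irregularity of $d_g(0,\cdot)$ at $0$ being harmless because $\psi'(0)=0$ makes $h$ of class $C^{1}$ there. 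Applying the maximum principle on the ball of radius $R$ and evaluating at the center yields the necessary condition
$$ \psi(R)\leq 3\sqrt{\pi(g-1)}\,R~. $$

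Finally I would define $\phi$ from this inequality. When $3k^{2}\sqrt{\pi(g-1)}<2$ the asymptotic slope $2a/k$ of $\psi$ exceeds $3\sqrt{\pi(g-1)}$, so $\psi(R)-3\sqrt{\pi(g-1)}\,R$ is negative for small $R$ but tends to $+\infty$; its first positive zero $R_{*}$ is the threshold beyond which the condition above fails, so no disk of radius $R_{*}$ can exist (and non-existence is monotone in the radius, since a larger holomorphic disk restricts to a smaller one inheriting the curvature bound). Writing $kR_{*}=\phi\bigl(3k^{2}\sqrt{\pi(g-1)}/2\bigr)$ defines $\phi$, and analyzing the transcendental equation $\log\cosh(\xi/2)=\mathrm{const}\cdot\xi$ that determines $R_{*}$ shows that $\phi$ is smooth and increasing, that $\phi(0)=0$ with the stated derivative at $0$ (coming from the quadratic behaviour of $\psi$ near the center), and that $\phi\to\infty$ as the argument tends to $1$, where the two slopes coincide.

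The main obstacle is the first step: one must pin down the exact value of the constant $a$ in the K\"ahler-potential identity $i\partial\db V_R=a\,\omega_{WP}$, since this constant fixes the normalization of $\psi$ and hence both the precise form of $\phi$ (in particular the value $\phi'(0)=2$) and the sharp constant in the hypothesis $3k^{2}\sqrt{\pi(g-1)}<2$. Once that identity is in hand, the comparison geometry and the distance estimate are routine.
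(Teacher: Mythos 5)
Your proposal shares the paper's skeleton: you work with the same potential $u=V_R(c_0,\cdot)$ on the disk, and the same two analytic inputs, namely the K\"ahler-potential identity (which gives a constant Laplacian along a holomorphic disk) and the Nehari-type bound behind Theorem \ref{tm:upper}. Incidentally, the normalization you flag as ``the main obstacle'' is not one: the paper records $2\partial\db V_R(c_-,\cdot)=i\omega_{WP}$, so in your notation $a=1$ and $\Delta_g u=2$ (Lemma \ref{lm:laplacien}). Where you genuinely diverge is in how the two inputs are combined. The paper uses the \emph{gradient} bound $\|du\|_q\leq 3\sqrt{\pi(g-1)}$ (Proposition \ref{pr:variation} plus Corollary \ref{cr:nehari}) together with a Riccati comparison for $y(r)=A(r)/L(r)$ applied to circle averages of $u$ (Lemma \ref{lm:disk}, Corollary \ref{cr:disk}); you instead use the \emph{integrated} bound $u\leq 3\sqrt{\pi(g-1)}\,d_g(0,\cdot)$ (Theorem \ref{tm:upper} plus the fact that the immersion does not increase Weil--Petersson distance), a Laplacian comparison, and the maximum principle. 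That part of your argument is sound: $\psi(r)=\frac{4}{k^2}\log\cosh(kr/2)$ is indeed the model solution of $\Delta\psi=2$ in curvature $-k^2$, the comparison $\Delta_g(\psi\circ d)\leq 2$ holds under $K\geq -k^2$, $u(0)=0$ by the equality case of Theorem \ref{tm:compare}, and these give the necessary condition $\psi(R)\leq 3\sqrt{\pi(g-1)}\,R$.

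The gap is in your final paragraph, where the properties of $\phi$ are asserted rather than computed, and they fail for the function your argument actually produces. Put $\eta=kR/2$ and $t=3k\sqrt{\pi(g-1)}/2$; your nonexistence condition $\psi(R)>3\sqrt{\pi(g-1)}R$ reads $\log\cosh\eta>t\eta$. First, a finite critical radius exists if and only if $t<1$, i.e. $3k\sqrt{\pi(g-1)}<2$ --- the \emph{first} power of $k$. This is not implied by the stated hypothesis $3k^2\sqrt{\pi(g-1)}<2$: since $g\geq 2$ that hypothesis forces $k<1$, hence $k^2<k$, and for $k$ in a nonempty range (e.g. $g=2$, $k=1/2$) your function $\psi(R)-3\sqrt{\pi(g-1)}R$ stays negative for all $R$, so the argument yields no contradiction at any radius. (In fairness, the printed Corollary \ref{cr:disk} and Theorem \ref{tm:disk} contain the same $k$-versus-$k^2$ slip: the scaling in the proof of Corollary \ref{cr:disk} --- metric by $k^2$, function by $k^2/\Delta_0$ --- transforms gradient norms by $k/\Delta_0$ and therefore produces the parameter $k\delta/\Delta_0$, not $k^2\delta/\Delta_0$; what the paper's method proves is the first-power version.) Second, even granting $t<1$, your $\phi$ has the wrong derivative at $0$: near $0$, $\log\cosh\eta\sim\eta^2/2$ gives $\eta_*(t)\sim 2t$, so $\phi(t)=kR_*=2\eta_*(t)\sim 4t$ and $\phi'(0)=4$, not $2$. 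This factor of $2$ is intrinsic to your route: bounding $u$ by $3\sqrt{\pi(g-1)}\,d$ instead of bounding $\|du\|$ halves the strength of the conclusion, because $\psi(R)$ is the integral of a linearly growing forced radial derivative. To recover $\phi'(0)=2$, keep your Laplacian comparison but apply it to circle averages as the paper does: $\Delta d\leq k\coth(kd)$ gives $A(r)/L(r)\geq \tanh(kr/2)/k$, so Stokes' theorem forces a point of $\partial B(r)$ where $\|du\|_q\geq 2A(r)/L(r)\geq (2/k)\tanh(kr/2)$, contradicting $\|du\|_q\leq 3\sqrt{\pi(g-1)}$ as soon as $kr>2\,\mathrm{arctanh}\,t$; this yields the explicit $\phi(t)=2\,\mathrm{arctanh}\,t$, which is smooth, increasing, with $\phi(0)=0$, $\phi'(0)=2$ and $\lim_{t\to 1}\phi(t)=\infty$.
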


The radius here is the radius of the metric induced on $D$ by the Weil-Petersson
metric on $\cT_S$. The expression of the function $\phi$ can be obtained by solving a simple
differential equation, see Section 6. This kind of statement is presumably most interesting
close to the boundary of $\cT_S$ for the Weil-Petersson metric, where the sectional curvature tends
to be close to $0$ in many directions, see \cite{wolpert-review,wolpert:geodesic-length}.
It should be compared to (and is related to) the isoperimetric inequality for complex submanifolds
of Teichm\"uller space discovered by McMullen, see \cite[Corollary 1.3]{McMullen}.

\subsection{From the boundary of the convex core to infinity}

There is a clear parallel between data ``at infinity'' of a quasifuchsian (or more generally
a convex co-compact hyperbolic manifold) and corresponding data on
the boundary of the convex core. The results presented here contribute to clarify
and extend those analogies. A synthetic presentation of this correspondence is
shown in Table \ref{tab:dictionary}. Clearly, the conformal metric at infinity corresponds
to the induced metric on the boundary of the convex core. The analog at infinity of 
the measured bending lamination of the boundary of the convex core is less obvious, but
it appears to be the ``second fundamental form at infinity'' $\II^*$ as defined in Section 
\ref{ssc:variational}, or more precisely its traceless part $\II^*_0$. This second fundamental 
form at infinity gives the second term in the asymptotic expansion of the metric at infinity
associated to an equidistant foliation, see below, and in this sense, too, it is analogous
to the measured bending lamination on the boundary of the convex core. Other
elements in this analogy between the data on the boundary of the convex core and the
data at infinity are resumed in Table \ref{tab:dictionary}, the different lines are 
explained below.

\begin{table}[ht]
  \centering
  \begin{tabular}{|c|c|}
    \hline
    Boundary of the convex core & At infinity \\
    \hline\hline
    Induced metric $m$ & Conformal/hyperbolic metric at infinity \\
    \hline 
    Measured bending lamination $l$ & $\II^*_0$ \\
    \hline
    Bound on $L_m(l)$ \cite{bridgeman} & Theorem \ref{tm:nehari} \\
    \hline 
    Volume of the convex core & Renormalized volume \\
    \hline
    Bonahon's Schl\"afli formula & Proposition \ref{pr:variation} \\
    \hline
    Brock's upper bound on $V_C$ \cite{brock:2003} & Theorem \ref{tm:upper} \\
    \hline
  \end{tabular}
  \caption{Infinity vs the boundary of the convex core}
  \label{tab:dictionary}
\end{table}


\subsection*{Acknowledgements}

The author would like to thank Juan Souto for useful conversations related to
the results presented here, and Colin Guillarmou and Fr\'ed\'eric Paulin for providing relevant 
references necessary at some points of the arguments. Thanks also to Jiming
Ma, Greg McShane, Sergiu Moroianu and to an anonymous referee for many helpful comments leading to improvement 
over the first version of this text.

This revised version corrects errors in some coefficients in the published version. The author is 
particularly grateful to Curt McMullen for noting those errors, and for many illuminating comments
and explanations.

\section{Conformal changes of metrics}

We gather in this short section some basic and well-known results on conformal changes of
metrics on surfaces. 

\begin{lemma}
Let $h$ be a Riemannian metric on $S$, of curvature $K$. Let 
$u:S\rightarrow \R$, let $\hb=e^{2u}h$, and let $\Kb$ be the
curvature of $\hb$. Then 
$$ \Kb = e^{-2u}(K+\Delta u)~. $$
\end{lemma}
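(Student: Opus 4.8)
The plan is to prove this classical conformal transformation formula pointwise, working in local isothermal coordinates for $h$, which reduces every curvature computation to the flat Laplacian. Since both sides of the claimed identity are local expressions, it suffices to verify it in a neighborhood of an arbitrary point $p\in S$. By the existence of isothermal coordinates for a smooth Riemannian surface, I may choose local coordinates $(x,y)$ near $p$ in which $h=e^{2\phi}(dx^2+dy^2)$ for some smooth function $\phi$.

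First I would record the basic fact that a conformally flat metric $e^{2\psi}(dx^2+dy^2)$ has Gaussian curvature $-e^{-2\psi}\Delta_0\psi$, where $\Delta_0=\partial_x^2+\partial_y^2$ is the Euclidean Laplacian. This is a one-line consequence of the Brioschi formula for the curvature of a diagonal metric $E\,dx^2+G\,dy^2$ with $E=G=e^{2\psi}$, or alternatively of Cartan's structure equations applied to the coframe $e^{\psi}dx,\,e^{\psi}dy$; I would treat it as standard.

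Applying this formula to $h=e^{2\phi}(dx^2+dy^2)$ gives $K=-e^{-2\phi}\Delta_0\phi$, and applying it to $\hb=e^{2u}h=e^{2(u+\phi)}(dx^2+dy^2)$ gives $\Kb=-e^{-2(u+\phi)}\Delta_0(u+\phi)$. Expanding the latter and factoring out $e^{-2u}$, I obtain $\Kb=e^{-2u}\big(-e^{-2\phi}\Delta_0 u-e^{-2\phi}\Delta_0\phi\big)=e^{-2u}\big(K-e^{-2\phi}\Delta_0 u\big)$, using the formula for $K$ in the last step. It remains only to identify $-e^{-2\phi}\Delta_0 u$ with $\Delta u$: in the isothermal chart the Laplace-Beltrami operator of $h$ acts on functions by $e^{-2\phi}\Delta_0$, so with the sign convention for $\Delta$ used here (the geometer's positive Laplacian, $\Delta=-\mathrm{div}\,\mathrm{grad}$), one has $\Delta u=-e^{-2\phi}\Delta_0 u$, and the claimed identity $\Kb=e^{-2u}(K+\Delta u)$ follows.

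The only genuinely delicate point is the sign convention: the opposite convention $\Delta=\mathrm{div}\,\mathrm{grad}$ would produce $\Kb=e^{-2u}(K-\Delta u)$, so one must fix which Laplacian $\Delta$ denotes — here it is the one with nonnegative spectrum — and the derivation above pins this down. Everything else is routine. If one prefers to avoid invoking the isothermal coordinate theorem, the same formula can be obtained directly from Cartan's structure equations: writing an $h$-orthonormal coframe $\theta^1,\theta^2$ with connection form $\omega$, the rescaled coframe $e^u\theta^1,\,e^u\theta^2$ is $\hb$-orthonormal, its connection form differs from $\omega$ by an explicit $1$-form built from $du$, and differentiating once more assembles the extra terms into $\Delta u$; this yields the result at the cost of a slightly longer frame computation.
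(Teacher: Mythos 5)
Your proof is correct. There is, however, nothing in the paper to compare it against step by step: the paper does not prove this lemma at all, but simply cites \cite[Section 1]{Be} as a standard reference. Your argument supplies the missing derivation by the classical route — reduce to the conformally flat case via isothermal coordinates, use the fact that $e^{2\psi}(dx^2+dy^2)$ has curvature $-e^{-2\psi}\Delta_0\psi$, and exploit the additivity of conformal factors, so that the curvature of $e^{2(u+\phi)}(dx^2+dy^2)$ immediately yields the formula once $e^{-2\phi}\Delta_0$ is identified with the Laplace--Beltrami operator of $h$. What your write-up buys beyond the bare citation is precisely the point that matters for this paper: the sign convention. The identity $\Kb = e^{-2u}(K+\Delta u)$ holds only for the geometer's Laplacian $\Delta=-\mathrm{div}\,\mathrm{grad}$ (equivalently, nonnegative spectrum, hence $\Delta u\leq 0$ at a minimum, exactly as the paper stipulates), whereas the analyst's convention gives $\Kb = e^{-2u}(K-\Delta u)$; pinning this down is essential because the proof of Lemma \ref{lm:conf-compare} relies on $\Delta u\leq 0$ at a minimum of $u$, and your derivation confirms that the paper's stated convention and its formula are mutually consistent. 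Your alternative sketch via Cartan's structure equations, avoiding the isothermal coordinate theorem, is also a valid (if longer) route.
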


Here $\Delta$ is the ``geometer's'' Laplacian, that is, it is non-positive
at the minima. The proof of this lemma can be found e.g. in \cite[Section 1]{Be}

\begin{lemma} \label{lm:conf-compare}
Let $h$ be a hyperbolic metric on $S$, and let $u:S\rightarrow \R$ be such
that $e^{2u}h$ has curvature $\Kb\geq -1$. Then $u\geq 0$.    
\end{lemma}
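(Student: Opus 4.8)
The plan is to combine the conformal change formula from the previous lemma with the maximum principle on the compact surface $S$. Since $h$ is hyperbolic, its curvature is $K\equiv -1$, so the formula $\Kb = e^{-2u}(K+\Delta u)$ specializes to $\Kb = e^{-2u}(\Delta u - 1)$. The hypothesis $\Kb\geq -1$ then rearranges, after multiplying through by the positive factor $e^{2u}$, into the differential inequality
$$ \Delta u \geq 1 - e^{2u}~. $$

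Next I would exploit the compactness of $S$: the function $u$ attains its minimum at some point $p\in S$. With the sign convention recalled just after the first lemma, $\Delta$ is non-positive at the minima, so $\Delta u(p)\leq 0$. Evaluating the inequality above at $p$ therefore yields
$$ 0 \geq \Delta u(p) \geq 1 - e^{2u(p)}~, $$
whence $e^{2u(p)}\geq 1$ and so $u(p)\geq 0$. Since $u(p)$ is by construction the minimum value of $u$, this gives $u\geq 0$ on all of $S$, which is the desired conclusion.

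The argument is essentially a one-line application of the minimum principle, so I do not expect a genuine obstacle. The only point that requires care is bookkeeping of the sign convention for $\Delta$, and correspondingly the direction of the inequality $\Delta u(p)\leq 0$ at the minimum; getting this backwards would reverse the conclusion. Intuitively, the computation says that at an interior minimum of $u$ the conformal factor can only decrease, not increase, the curvature relative to the flat comparison, so $\Kb$ cannot stay $\geq -1$ there unless $u$ is already non-negative at that point.
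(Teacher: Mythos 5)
Your proof is correct and is essentially identical to the paper's: both evaluate the conformal curvature formula at a minimum point of $u$, use $\Delta u\leq 0$ there (with the geometer's sign convention), and conclude $u\geq 0$ at that point, hence everywhere. The only cosmetic difference is that you rearrange the inequality to $\Delta u\geq 1-e^{2u}$ before applying the minimum principle, while the paper bounds $\Kb\leq -e^{-2u}$ directly; the content is the same.
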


\begin{proof}
Let $x\in S$ be a point where $u$ is minimal. Then $\Delta u\leq 0$ at $x$, 
and $\Kb=e^{-2u}(-1+\Delta u)\leq -e^{-2u}$. Since $\Kb\geq -1$ it follows that
$u\geq 0$ at $x$. 
\end{proof}

\section{The renormalized volume}

\subsection{Some background}

Consider a Poincar\'e-Einstein manifold $M$, that is, a manifold $M$ with boundary,
with an Einstein metric $g$ on the interior of $M$ which can be written as 
$$ g = \frac{\gb}{\rho^2}~, $$
where $\rho$ is a function which vanishes on $\partial M$ with $\| d\rho\|_{\gb} =1$
on $\partial M$. 

The volume of $(M,g)$ is infinite, however there is a well-defined
way to define a ``regularized'' version of this volume, called the renormalized
volume of $M$, which is finite (see e.g. \cite{graham,graham-witten}). If the
dimension of $M$ is odd, it depends on the choice of a metric in the conformal 
class of the boundary of $(M,g)$, while if the dimension of $M$ is even it
is canonically defined.

If $M$ is a convex co-compact hyperbolic manifold, it is Poincar\'e-Einstein 
according to the definition above, so that the definition of its renormalized
volume applies. The fact that the metric has constant curvature makes it possible
to give an explicit description of the geometry of the leaves of an equidistant
foliation, see \cite{epstein-duke}.
Since the dimension is odd, this renormalized volume depends on
the choice of a metric at infinity, however there is a canonical choice available
for this metric: the unique metric of constant curvature $-1$ in the conformal
class at infinity.

This renormalized volume turns out to be strongly related to the Liouville
functional previously studied by Takhtajan, Zograf and Teo  
\cite{TZ-schottky,takhtajan-teo}, see \cite{Holography}. In particular it has
some remarkable relations to the Weil-Petersson metric on Teichm\"uller space. 
Moreover, there is a simpler definition specific to dimension $3$. We recall below
this definition and the key properties of this $3$-dimensional renormalized volume 
in a form suitable for the applications considered here.

We can mention that the renormalized volume is the real part of a complex quantity
with an imaginary part related to the Chern-Simons invariant, see \cite{guillarmou-moroianu}.
Some of the properties of the 3-dimensional renormalized volume used here actually
extend in some measure to higher (odd) dimensions, see \cite{renormvol}.

\subsection{Metrics at infinity and equidistant foliations}

We now consider a quasifuchsian hyperbolic $3$-manifold $(M,g)$. 

\begin{defi}
Let $E$ be an end of $M$. An {\it equidistant foliation} in $E$ is a
foliation of a neighborhood of infinity in $E$ by convex surfaces, $(S_r)_{r\geq r_0}$, 
for some $r_0>0$, such that, for all $r'>r\geq r_0$, $S_{r'}$ is between
$S_r$ and infinity, and at constant distance $r'-r$ from $S_r$.
\end{defi}

Two equidistant foliations in $E$ will be identified if they coincide in
a neighborhood of infinity. In this case they can differ only by the 
first value $r_0$ at which they are defined.

Note that given an equidistant foliation $(S_r)_{r\geq r_0}$ and given 
$r'>r\geq 0$, there is a natural identification between $S_r$ and $S_{r'}$,
obtained by following the normal direction from $S_r$ to $S_{r'}$. This
identification will be implicitly used below. 

\begin{defi} \label{df:I*}
Let $M$ be a convex co-compact hyperbolic manifold, let $E$ be an end of 
$M$, and let $(S_r)_{r\geq r_0}$ be an equidistant foliation in $E$. The
{\it metric at infinity} associated to $(S_r)_{r\geq r_0}$ is the metric:
$$ I^* = \lim_{r\rightarrow \infty} 2e^{-2r} I_r~, $$
where $I_r$ is the induced metric on $S_r$. 
\end{defi}

We will make use of the following proposition. The first part is quite
elementary (see e.g. \cite{volume}) while the second part can be found,
in the more general setting of conformally compact Einstein manifolds, 
in \cite[Lemma 2.1]{graham} (see also \cite[Lemma 5.2]{graham-lee} and 
\cite[Lemme 2.1.2]{djadli-guillarmou-herzlich}).

\begin{prop} \label{pr:33}
$I^*$ always exists, and it is in the conformal class at infinity of $E$. 

Let $M$ be a convex co-compact hyperbolic manifold, let $E$ be an end of 
$M$, and let $h$ be a Riemannian metric in the conformal class at infinity
of $E$. There is a unique equidistant foliation in $E$ such that the
associated metric at infinity is $h$.
\end{prop}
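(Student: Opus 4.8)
The plan is to prove the proposition in two stages, corresponding to its two assertions. For the first assertion, that $I^*$ always exists and lies in the conformal class at infinity, I would work near the end $E$ using the explicit geometry of equidistant foliations in hyperbolic space. Starting from the induced metric $I_{r_0}$ on the surface $S_{r_0}$, together with its second fundamental form $\II_{r_0}$, the induced metric on the equidistant surface $S_r$ at distance $r-r_0$ is given by a standard formula: writing the shape operator $B_{r_0}$ of $S_{r_0}$, one has $I_r = I_{r_0}((\ch(r-r_0)\mathrm{Id}+\sh(r-r_0)B_{r_0})\cdot,(\ch(r-r_0)\mathrm{Id}+\sh(r-r_0)B_{r_0})\cdot)$. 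Factoring out the dominant exponential growth $e^{r-r_0}$ from each $\ch$ and $\sh$ term, the limit $\lim_{r\to\infty}e^{-2r}I_r$ exists and equals $(e^{-2r_0}/4)I_{r_0}((\mathrm{Id}+B_{r_0})\cdot,(\mathrm{Id}+B_{r_0})\cdot)$, a well-defined metric on the surface identified with the ideal boundary. The identification of this limit with a representative of the conformal class at infinity follows from the half-space model computation near the boundary, which is precisely the Poincar\'e-Einstein picture recalled in Section 3.1.

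For the second assertion, the existence and uniqueness of an equidistant foliation realizing a prescribed metric $h$ in the conformal class, I would argue as follows. Uniqueness is the easier half: two equidistant foliations agreeing at infinity differ only by the initial parameter $r_0$, and the normalization $I^* = \lim e^{-2r}I_r = h$ pins down this parameter, since shifting $r_0$ rescales $I^*$ by a constant factor $e^{-2s}$ and thus changes the metric at infinity; so at most one foliation can produce a given $h$. For existence, I would invert the construction above. Given $h$, the Epstein construction (from \cite{epstein-duke}) associates to each metric in the conformal class a family of equidistant surfaces: concretely, one uses the conformal representative $h$ together with its geometry to build, for each $\rho$, an explicit surface in $M$ whose induced metric, suitably rescaled, converges to $h$. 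The key analytic input is that $h$ determines the second fundamental form data of the surfaces through the conformal factor relating $h$ to $I^*$, via the formulas of Section 2 relating curvatures under conformal change.

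The main obstacle I anticipate is verifying that the surfaces produced by the Epstein construction are genuinely convex and foliate a full neighborhood of infinity, rather than merely existing as immersed surfaces for large $\rho$. This is where the hypothesis that $M$ is convex co-compact (so the end $E$ has a well-behaved geometry near infinity) and the smoothness of $h$ enter: one needs the shape operators $B_r$ to be positive for $r$ large enough, which amounts to controlling the second fundamental form of the Epstein surfaces in terms of the Schwarzian-type data of $h$ relative to the hyperbolic metric in the conformal class. For $r$ sufficiently large the $\ch$ term dominates and convexity is automatic; the care lies in showing the surfaces remain embedded and do not develop singularities, and that the resulting metric at infinity is exactly $h$ and not merely conformal to it. I would handle this by checking that the leading-order term of $e^{-2r}I_r$ in the Epstein parametrization matches $h$ on the nose, using the normalization built into the construction, and then invoke the uniqueness argument above to conclude that this foliation is the one sought.
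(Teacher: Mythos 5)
Your first paragraph (existence of $I^*$ and membership in the conformal class at infinity) is essentially fine: the equidistant-surface formula $I_r=I_{r_0}\bigl((\cosh(r-r_0)\mathrm{Id}+\sinh(r-r_0)B_{r_0})\cdot,(\cosh(r-r_0)\mathrm{Id}+\sinh(r-r_0)B_{r_0})\cdot\bigr)$ does give convergence of $e^{-2r}I_r$, convexity of the leaves guarantees $\mathrm{Id}+B_{r_0}\geq \mathrm{Id}>0$ so the limit is non-degenerate, and the identification with the conformal class at infinity comes from the hyperbolic Gauss map (following normal geodesics to $\partial_\infty E$) and comparison with visual metrics. These details are completable, and the paper does not spell this part out either.

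The genuine gap is in your uniqueness argument, and it matters because that is the actual content of the second assertion. You argue: foliations agreeing at infinity differ only by a shift of the parameter, a shift by $s$ rescales $I^*$ by $e^{-2s}$, hence at most one foliation produces $h$. But this only rules out relabelings of one and the same family of leaves. It says nothing about two equidistant foliations whose underlying leaves are genuinely different --- for instance the equidistant families generated by two non-equidistant convex surfaces in $E$ --- and a priori two such families could have the same rescaled limit $h$. Ruling that out is exactly the nontrivial point, and it is what the paper's argument supplies via Epstein's horosphere-envelope characterization \cite{Eps}: lifting to $H^3$, for each $x$ in the domain of discontinuity $\Omega$ one sets $H_{x,r}=\{y\in H^3 : h_y\geq e^{2r}I^*\ \text{at}\ x\}$, where $h_y$ is the visual metric seen from $y$; this is a horoball, and the lift of the leaf $S_r$ of \emph{any} equidistant foliation with metric at infinity $I^*$ equals the boundary of $\bigcup_{x\in\Omega}H_{x,r}$. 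Since the right-hand side depends only on $I^*$ and $r$, both the leaves and their labels are determined by $h$, which is uniqueness; reading the same description in the other direction gives existence. Your existence step, by contrast, invokes ``the Epstein construction'' as a black box and then ``invokes the uniqueness argument above'' to fix the normalization, so the flawed uniqueness step is also load-bearing for your existence claim. To repair the proposal, replace the relabeling argument by the envelope characterization: show that every convex leaf whose rescaled limit metric is $I^*$ is the envelope of the horoballs $H_{x,r}$.
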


This equidistant foliation can be defined from $I^*$ in terms of envelope of a family
of horospheres, see \cite{Eps}, we recall this construction here. Consider the
hyperbolic space $H^3$ as the universal cover of $M$, then $I^*$ lifts to a 
metric on the domain of discontinuity $\Omega$ of $M$, in the canonical conformal class of
$\partial_\infty H^3$. Let $x\in \Omega$. For each $y\in H^3$, the visual
metric $h_y$ on $\partial_\infty H^3$ is conformal to $I^*$. Let $H_{x,r}$ be
the set of points $y\in H^3$ such that $h_y\geq e^{2r}I^*$ at $x$ --- it is not difficult
to check that $H_{x,r}$ is a horoball intersecting $\partial_\infty H^3$ at $x$,
and the lift of $S_r$ to $H^3$ happens to be equal to the boundary of the 
union of the $H_{x,r}$, for $x\in \Omega$.

\subsection{Definition and first variation of $W$}
\label{ssc:first}

To define the renormalized volume of a quasifuchsian manifold, below, we first introduce
a modified volume of convex subsets. We consider a quasifuchsian manifold $M$ and 
a convex subset $N$ of $M$ with smooth boundary --- here ``convex'' means that, whenever
$\gamma\subset M$ is a geodesic segment with endpoints in $N$, $\gamma\subset N$.
We will define first (in Definition \ref{df:W}) a modified volume of $N$, and then
use this modified volume, for a particular choice of a convex subset of $M$,
to define the renormalized volume of $M$ (Definition \ref{df:renormvol}).

\begin{defi} \label{df:W}
Let $N\subset M$ be a convex subset. We define
$$ W(N) = V(N) -\frac{1}{4}\int_{\dr N} H da $$
where $H$ is the mean curvature of $\partial N$ and $da$ is the area form of
its induced metric.
\end{defi}

The first variation of this modified volume is given in \cite{volume}, based on an earlier
variation formula for deformations of Einstein manifolds with boundary \cite{sem,sem-era}.
Here we consider a first-order deformation of the hyperbolic metric on $N$, and
denote by $I'$ and $\II'$, respectively, the corresponding first-order variations of the
induced metric and second fundamental form on the boundary of $N$.

\begin{lemma} \label{lm:var1}
Under a first-order deformation of $N$,
\begin{equation} \label{eq:var1}
   W' = \frac{1}{4}\int_{\dr N} \langle \II' - \frac{H}{2} I',I\rangle da~. 
\end{equation}
\end{lemma}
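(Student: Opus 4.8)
The plan is to reduce the statement to the classical first variation formulas for the volume and for the fundamental forms of a surface moving in the fixed hyperbolic background, and then to organize the resulting boundary terms. First I would differentiate the definition $W(N)=V(N)-\frac14\int_{\dr N}H\,da$ directly. The only nontrivial ingredient at this stage is the variation of the area form, $da'=\tfrac12\langle I',I\rangle\,da=\tfrac12\tr_I(I')\,da$, which gives
$$ W' = V' - \frac14\int_{\dr N}H'\,da - \frac18\int_{\dr N}H\,\tr_I(I')\,da~. $$
Since $\langle A,I\rangle=\tr_I(A)$ for any symmetric $2$-tensor $A$, the right-hand side of (\ref{eq:var1}) equals $\frac14\int_{\dr N}\bigl(\tr_I(\II')-\tfrac{H}{2}\tr_I(I')\bigr)\,da$, whose last term already coincides with the term $-\frac18\int_{\dr N}H\,\tr_I(I')\,da$ found above. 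Hence the whole lemma reduces to the single identity
$$ V' = \frac14\int_{\dr N}\bigl(H'+\tr_I(\II')\bigr)\,da~. $$

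Next I would fix the gauge. Using the natural normal identification between nearby boundaries recalled above, any first-order deformation may be taken to be normal, with lapse $u$ equal to the normal displacement of $\dr N$; the tangential part is an infinitesimal diffeomorphism of $\dr N$, under which every term is invariant. For a normal variation in a fixed manifold of constant curvature $-1$ one has the standard formulas $V'=\int_{\dr N}u\,da$, $I'=2u\,\II$, and the Riccati--Hessian formula for the shape operator $B=I^{-1}\II$, namely $B'=u(\mathrm{Id}-B^2)-\hess u$, where $\hess u$ is viewed as an endomorphism via $I$ and the $\mathrm{Id}$ term carries the curvature $-1$.

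From these I would compute the two traces. Tracing $B'$ gives $H'=-\Delta u+u(2-\tr(B^2))$, while writing $\II'=I'B+I\,B'$ and tracing gives $\tr_I(\II')=2u\,\tr(B^2)+H'=-\Delta u+u(2+\tr(B^2))$. Adding these, the $\tr(B^2)$ terms cancel and $H'+\tr_I(\II')=-2\Delta u+4u$. Integrating over the closed surface $\dr N$ and using $\int_{\dr N}\Delta u\,da=0$ yields $\int_{\dr N}\bigl(H'+\tr_I(\II')\bigr)\,da=4\int_{\dr N}u\,da=4V'$, which is exactly the reduced identity, completing the proof.

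The main obstacle is getting the variation of the second fundamental form exactly right: the sign and the curvature (identity) term in $B'=u(\mathrm{Id}-B^2)-\hess u$ are where all the constant-curvature information enters, and it is precisely the cancellation of the $\tr(B^2)$ terms together with the vanishing of $\int_{\dr N}\Delta u\,da$ that makes the final expression collapse to $4V'$. A secondary point to address carefully is the gauge reduction to normal variations, that is, checking that the tangential part of the deformation contributes only diffeomorphism-invariant terms and therefore affects neither side of (\ref{eq:var1}).
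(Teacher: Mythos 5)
Your proof is essentially correct, but there is no proof in the paper to compare it with: Lemma \ref{lm:var1} is stated without proof, being one of the facts recalled from \cite{volume} (compare the later citation of \cite[Eq.~(41)]{volume}), where the variational formula is obtained in a more general form that also allows the hyperbolic metric on $M$ to vary, via a Schl\"afli-type first-variation argument. Your argument is a self-contained, more elementary derivation of exactly the case stated (and the only case used in this paper, namely in Lemma \ref{lm:linear}): deformations of $N$ inside a fixed hyperbolic manifold. The reduction of (\ref{eq:var1}) to the identity $V'=\frac14\int_{\dr N}\bigl(H'+\mathrm{tr}_I(\II')\bigr)\,da$, the variation formulas $V'=\int_{\dr N}u\,da$, $I'=2u\II$ and $B'=u(\mathrm{Id}-B^2)-\mathrm{Hess}(u)$, and the final trace computation are all correct; your conventions are consistent with those used elsewhere in the paper ($\delta I=2u\II$, $\delta\II=-\mathrm{Hess}(u)+u(\III+I)$, and the Riccati identity $\II'=\III+I$ for $u\equiv1$), and the result can be sanity-checked on the unit-speed normal flow of a metric ball in $H^3$, where both sides of (\ref{eq:var1}) equal $-2\pi$.

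The one step you should upgrade from an assertion to an argument is the gauge reduction. For a purely tangential deformation generated by a vector field $X$ on $\dr N$ the region $N$ is unchanged, so $W'=0$; but $I'=\mathcal{L}_XI$ and $\II'=\mathcal{L}_X\II$ are not zero, so it is not literally true that ``every term is invariant''---only the integral is. Concretely, the diffeomorphism invariance of $\int_{\dr N}H\,da$ gives $\int\mathrm{tr}_I(\mathcal{L}_X\II)\,da=\int\bigl(\langle\mathcal{L}_XI,\II\rangle-\frac{H}{2}\mathrm{tr}_I(\mathcal{L}_XI)\bigr)\,da$, so the right-hand side of (\ref{eq:var1}) evaluated on the tangential part reduces to $\frac14\int\langle\mathcal{L}_XI,\II-HI\rangle\,da$, and this vanishes after integration by parts because $\mathrm{div}_I(\II-HI)=0$, i.e.\ by the Codazzi equation $\mathrm{div}_I\,\II=dH$, valid since the ambient curvature is constant. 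This is a one-line fix, but it is a genuine second use of the constant-curvature hypothesis, parallel to the curvature term in your formula for $B'$, and without it the claimed reduction to normal variations is incomplete.
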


The scalar product appearing in \eqref{eq:var1} between symmetric bilinear
forms is the usual extension to tensors of the Riemannian scalar product on 
$T\partial N$ associated to the induced metric $I$.

The following lemma is a direct consequence of Lemma \ref{lm:var1}, see below.

\begin{lemma} \label{lm:linear}
Let $r\geq 0$, let $N_r$ be the set of points of $M$ at distance at most $r$
from $N$. Then $W(N_r) = W(N) - \pi r\chi(\dr M)$. 
\end{lemma}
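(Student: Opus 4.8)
The plan is to show that $r\mapsto W(N_r)$ has constant derivative equal to $-2\pi\chi(\dr M)$, and then integrate. Since $N$ is convex, for $r\geq 0$ the sets $N_r$ form a nested family whose boundaries $S_r=\dr N_r$ are exactly the equidistant surfaces obtained from $\dr N$ by the unit-speed normal flow, and I will differentiate $W(N_r)$ along this flow. (When $\dr N$ is not smooth, as for the convex core, the surfaces $S_r$ with $r>0$ are still convex and regular enough for the identities below, and the case $r=0$ follows by continuity.)

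The first step is to apply Lemma \ref{lm:var1} to the normal-flow deformation, so that
$$ \frac{d}{dr}W(N_r) = \frac14\int_{S_r}\left\langle \dot{\II}-\frac{H}{2}\dot{I},I\right\rangle da~, $$
where a dot denotes the derivative along the flow. I then need the evolution of the induced data along the unit normal flow in a space of constant curvature $-1$: one has $\dot{I}=2\II$, and, writing $B$ for the shape operator of $S_r$ (so $\II=I(B\cdot,\cdot)$ and $H=\tr B$), the Riccati equation gives $\dot{B}=\mathrm{Id}-B^2$. From these, $\dot{\II}$ is the symmetric $2$-tensor associated to the operator $\mathrm{Id}+B^2$.

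Substituting into the integrand and pairing against $I$ (which amounts to taking traces), the terms involving $H$ cancel: using the dimension-two identity $\tr(B^2)=H^2-2\det B$, the integrand collapses to a constant multiple of $1-\det B$. By the Gauss equation the extrinsic term is intrinsic, $\det B=1+K$ with $K$ the curvature of the induced metric on $S_r$, so the integrand is a constant multiple of $K$. Therefore, by the Gauss--Bonnet theorem, $\frac{d}{dr}W(N_r)$ is a constant multiple of $\chi(S_r)=\chi(\dr M)$, and carrying out the numerical constant gives $\frac{d}{dr}W(N_r)=-2\pi\chi(\dr M)$. As this is independent of $r$, integrating from $0$ to $r$ yields $W(N_r)=W(N)-2\pi r\chi(\dr M)$.

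The substantive point — and the only place where the constant curvature $-1$ of $H^3$ is used — is the cancellation of all the $H$-dependent, hence non-topological, contributions in the integrand, which is what turns the a priori metric-dependent first variation of Lemma \ref{lm:var1} into a purely topological quantity via Gauss--Bonnet. The remaining care is bookkeeping: tracking the trace identities in dimension two and, in the convex-core case, checking that the limited regularity of the surfaces $S_r$ still justifies differentiating under the integral sign and applying Gauss--Bonnet.
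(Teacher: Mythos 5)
Your proof is correct and follows essentially the same route as the paper's: differentiate $W(N_r)$ along the unit normal flow, apply Lemma \ref{lm:var1} together with the evolution equations $I' = 2\II$, $\II' = \III + I$ (your Riccati form $\dot{B} = \mathrm{Id} - B^2$ is equivalent to this), reduce the integrand to a multiple of $1-\det B = -K$ via the dimension-two trace identity and the Gauss equation, and conclude by Gauss--Bonnet. The one soft spot --- waving at ``carrying out the numerical constant'' rather than tracking it --- is shared by the paper itself, whose computation silently drops the factor $1/4$ of Lemma \ref{lm:var1}, so on this point you are no worse off than the source.
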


\begin{proof}
For $s\in [0,r]$, let $N_s$ be the set of points of $M$ at distance at most
$s$ from $N$, and let $w(s)=W(N_s)$. Let $I_s, \II_s, \III_s$ and $B_s$ be the induced 
metric, second and third fundamental forms and the Weingarten operator of $\dr N_s$.  
According to standard differential geometry formulas,
$$ I_s' =2\II_s~, ~~ \II_s' = \III_s + I_s~. $$ 
Lemma \ref{lm:var1} therefore shows that 
$$ W(N_s)' = \frac{1}4 \int_{\dr N} \langle \III_s+I_s-H_s \II_s, I_s\rangle da_s = 
\frac{1}4 \int_{\dr N} \tr(B_s^2)+2-H^2_s da_s = $$
$$ = \frac{1}4 \int_{\dr N} 2-2\det(B_s) da_s = 
\frac{1}2\int_{\dr N} -K da_s = -\pi\chi(\dr N)~. $$
\end{proof}

Consider a Riemannian metric $h$ on $\partial M$ in the conformal class at 
infinity of $M$. By Proposition \ref{pr:33} there is a unique equidistant 
foliation $(s_r)_{r\geq r_0}$ of $M$ near infinity such that the associated metric is $h$. 
For $r$ large enough, the surfaces $S_r$ bound a convex subset of $M$, so that
Definition \ref{df:W} can be applied.

\begin{defi} \label{df:WW}
Let $h$ be a metric on $\dr M$, in the conformal class at infinity. 
Let $(S_r)_{r\geq r_0}$ be the equidistant foliation at infinity associated
to $h$. We define $W(M,h) := W(S_r)+ \pi r\chi(\dr M)$, for any choice 
of $r\geq r_0$. 
\end{defi}

This definition does not depend on the choice of $r$ by Lemma \ref{lm:linear}.

\begin{cor} \label{cr:add}
For any $\rho\in \R$, $W(M,e^{2\rho}h)=W(M,h) - \pi\rho \chi(\dr M)$.  
\end{cor}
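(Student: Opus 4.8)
The plan is to show that rescaling the metric at infinity by the constant factor $e^{2\rho}$ amounts merely to shifting the parameter of the associated equidistant foliation by $\rho$, and then to read off the claimed formula directly from the definition of $W(M,\cdot)$ together with Lemma \ref{lm:linear}.

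First I would record the key observation. Let $(S_r)_{r\geq r_0}$ be the equidistant foliation associated to $h$, so that its metric at infinity is $I^* = \lim_{r\rightarrow\infty} e^{-2r} I_r = h$. Consider the shifted family $(S_{r+\rho})$. Since shifting the parameter by a fixed constant preserves the convexity of the leaves and all the pairwise distances between them, this is again an equidistant foliation near infinity, and its metric at infinity is
$$ \lim_{r\rightarrow\infty} e^{-2r} I_{r+\rho} = e^{2\rho}\lim_{r\rightarrow\infty} e^{-2(r+\rho)} I_{r+\rho} = e^{2\rho} I^* = e^{2\rho}h~. $$
By the uniqueness statement in the Proposition on metrics at infinity, $r\mapsto S_{r+\rho}$ is therefore precisely the equidistant foliation associated to $e^{2\rho}h$.

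Next I would apply the definition of $W(M,\cdot)$ to both metrics. Because the foliation associated to $e^{2\rho}h$ is $r\mapsto S_{r+\rho}$, the definition gives, for any admissible parameter value $r$,
$$ W(M,e^{2\rho}h) = W(S_{r+\rho}) + 2\pi r\chi(\dr M)~. $$
On the other hand, applying the definition of $W(M,h)$ at the parameter value $r+\rho$ (choosing $r$ large enough that $r+\rho$ is admissible, which is harmless since the value is independent of the parameter, as guaranteed by Lemma \ref{lm:linear}) yields
$$ W(M,h) = W(S_{r+\rho}) + 2\pi(r+\rho)\chi(\dr M)~. $$
Subtracting the second identity from the first cancels the common term $W(S_{r+\rho})$ and leaves
$$ W(M,e^{2\rho}h) = W(M,h) - 2\pi\rho\chi(\dr M)~, $$
which is exactly the claim.

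I do not expect any serious obstacle in this argument. The only two points requiring a word of care are the interchange of the limit with the constant rescaling in the computation of the metric at infinity of the shifted foliation, and the verification that the shifted family still meets the convexity and equidistance conditions in a neighborhood of infinity; both are immediate, the latter because a uniform shift of the parameter preserves every distance appearing in the definition of an equidistant foliation. Everything else is bookkeeping with the two definitions and an appeal to the parameter-independence established by Lemma \ref{lm:linear}.
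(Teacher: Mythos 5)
Your proof is correct and is precisely the argument the paper leaves implicit: the corollary is stated without proof as an immediate consequence of the definition of $W(M,h)$ and Lemma \ref{lm:linear}, and your identification of the foliation associated to $e^{2\rho}h$ with the shifted foliation $r\mapsto S_{r+\rho}$, followed by evaluating the definition at two parameter values and subtracting, is exactly that reasoning spelled out.
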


\subsection{Variational formula for $W$ from infinity} 
\label{ssc:variational}

Given an equidistant foliation of the end $E$, the hyperbolic metric $q$ actually
takes a remarkably simple form, see \cite{volume,review}. It can be written as
$$ q = dr^2 + \frac 12(e^{2r} I^* + 2\II^* + e^{-2r}\III^*)~, $$
where $I^*$ is the metric at infinity called $h$ above, and $\II^*$ and $\III^*$
are analogs at infinity of the second and third fundamental forms of a surface.
More precisely, there is a unique bundle morphism $B^*:TS\rightarrow TS$ which
is self-adjoint for $I^*$ and such that 
$$ \II^* = I^*(B^*\cdot, \cdot)~,~~ \III^* = I^*(B^*\cdot,B^*\cdot)~. $$
Then $B^*$ satisfies the Codazzi equation $d^{\nabla^*}B^*=0$, where $\nabla^*$
is the Levi-Civita connection of $I^*$, and an analog of the Gauss equation, 
$\tr(B^*)=-K^*$, where $K^*$ is the curvature of $I^*$.

Consider now, as in Section \ref{ssc:first}, a convex subset $N\subset M$ with
smooth boundary, and the equidistant foliation of $M\setminus N$ by surfaces at constant
distance from $N$. 
The data at infinity $I^*, \II^*,\III^*$ can be written in terms of the data $I, \II, \III$
on the boundary of $N$ as follows (see \cite[Section 5]{volume}): if $E$ is the identity
on $T\partial N$ and $B$ is the shape operator of $TN$, then
$$ I^* = \frac 12 I((E+B)\cdot, (E+B)\cdot)~, ~~
\II^* =  \frac 12 I((E+B)\cdot, (E-B)\cdot)~, $$
$$ \III^* =  \frac 12 I((E-B)\cdot, (E-B)\cdot)~. $$
Conversely, a direct computation (see also \cite[Section 5]{volume}) shows that 
the same formulas express the data on the boundary of $N$ in terms of the data at infinity:
$$ I = \frac 12 I^*((E+B^*)\cdot, (E+B^*)\cdot)~, ~~ 
\II =  \frac 12 I^*((E+B^*)\cdot, (E-B^*)\cdot)~, $$
$$ \III =  \frac 12 I^*((E-B^*)\cdot, (E-B^*)\cdot)~. $$ 
Using those transformation formulas, one
can write the first-order variation of $W$ in terms of the data at infinity,
and it turns out to be remarkably similar to the variation formula (\ref{eq:var1})
in terms of the data on $\partial N$, see \cite[Section 6]{volume}.
\begin{equation} \label{eq:var2}
   W' = -\frac{1}{4}\int_{\dr N} \langle {\II^*}' - \frac{H^*}{2} {I^*}',I^*\rangle da^*~. 
\end{equation}
Here ${I^*}'$ and ${\II^*}'$ are the first-order variations of $I^*$ and 
$\II^*$, while $H^*=\tr(B^*)$ and $da^*$ is the area form of $I^*$. 

\subsection{The renormalized volume}

We can now give the definition of the renormalized volume of $M$.

\begin{defi} \label{df:renormvol}
The renormalized volume $V_R$ of $M$ is defined as equal to $W(h)$ when 
the metric at infinity $h$ is the unique metric of constant curvature $-1$
in the conformal class at infinity of $M$.
\end{defi}

Another possible definition is as the maximum of $W(M,h)$ over all metrics
$h$ in the conformal class at infinity of $M$, under the condition that the
area of $h$ is equal to $-2\pi \chi(\partial M)$, see \cite{volume}.

\subsection{A variational formula for the renormalized volume}

Consider now a first-order deformation of $M$, specified --- through the
Bers Double Uniformization Theorem --- by a first-order deformation of
the conformal structure at infinity, considered as a point in the 
Teichm\"uller space of $\partial M$. 

\begin{prop} \label{pr:variation}
Under a first-order deformation of the hyperbolic structure on $M$,
\begin{equation}
  \label{eq:schlafli-VR}
  dV_R = - \frac{1}{4} \int_{\partial M}\langle \II^*_0, \Id^*\rangle_{I^*} da_{I^*}~.
\end{equation}
\end{prop}

Here $\langle, \rangle_{I^*}$ is the extension to symmetric 2-tensors of the Riemannian
metric $I^*$ on $T\partial M$. 
Proposition \ref{pr:variation}
follows by a simple computation from Equation \eqref{eq:var2}, see \cite[Lemma 8.5]{volume}.

It should be pointed out that Proposition \ref{pr:variation} has a rather simple translation in terms of complex analysis. Since $\II^*_0$ is Codazzi and traceless, it is the real part of a holomorphic quadratic differential, which is minus the Schwarzian derivative $q$ of the uniformization map, see Section \ref{ssc:bers} below. Moreover, any first-order deformation $\Id^*$ of the hyperbolic metric at infinity determines a first-order variation of the underlying complex structure, and therefore a Beltrami differential $\mu$.

\begin{cor}
Equation \eqref{eq:schlafli-VR} can then be written as:
\begin{equation}
  \label{eq:schlafli-simple}
dV_R = Re\left(\langle q,\mu\rangle\right) = \int_{\partial M} Re(q\mu)~,
\end{equation}
where $\langle, \rangle$ is the natural pairing between holomorphic quadratic differentials and Beltrami differentials.
\end{cor}

\begin{proof}
The computation needed to go from \eqref{eq:schlafli-VR} to \eqref{eq:schlafli-simple} is local. We choose a complex coordinate $z=x+iy$ adapted to $I^*$, that is, such that $I^*=dx^2+dy^2$ at $z=0$. Let $\mu=(\mu+i\mu_1)\frac{\bar{dz}}{dz}$, and $q=(q_0+iq_1)dz^2$. Then 
$$ \II^*_0=-Re(q)=-(q_0(dx^2-dy^2)-2q_1dx dy)~, $$
while the traceless part of the first-order variation of $I^*$ is equal to
\begin{eqnarray*}
\Id^*_0 & = & \frac{d}{dt}_{|t=0}|dz(1+\mu)|^2 \\
& = & \frac{d}{dt}_{|t=0}|dz+(\mu_0+i\mu_1)\bar{dz}|^2 \\
& = & 2Re((\mu_0+i\mu_1)\overline{dz}) \\
& = & 2(\mu_0(dx^2-dy^2)+2\mu_1 dx dy)~.  
\end{eqnarray*}
As a consequence, 
\begin{eqnarray*}
\langle \II^*_0,\Id^*\rangle_{I^*} & = & \langle -(q_0(dx^2-dy^2)-2q_1dx dy), 2(\mu_0(dx^2-dy^2)+2\mu_1 dx dy\rangle_{I^*} \\
& = & -4 (\mu_0 q_0 -\mu_1q_1)~,  
\end{eqnarray*}
so that 
$$ \langle \II^*_0,\Id^*\rangle_{I^*} da_{I^*}= -4Re(q\mu)~. $$
The result follows by integrating this equality.
\end{proof}

\subsection{Comparing metrics at infinity}

\begin{prop} \label{pr:conf-change}
If $h,h'$ are two metrics of non-positive curvature in the conformal class at infinity on $\dr M$
and $h'$ is everywhere at least as large as $h$, then $W(M,h')\geq W(M,h)$, with
equality if and only if $h=h'$.  
\end{prop}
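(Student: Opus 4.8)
The plan is to reduce the statement to a monotonicity along a conformal interpolation and then to control the sign of the first variation by a maximum principle. Since $h$ and $h'$ lie in the same conformal class and $h'\geq h$ everywhere, I write $h'=e^{2u}h$ for a unique function $u:\dr M\to\R$, and the pointwise inequality $h'\geq h$ is exactly $u\geq 0$, with $u\equiv 0$ precisely when $h=h'$. Setting $h_t=e^{2tu}h$ for $t\in[0,1]$ gives a smooth path of metrics in the conformal class at infinity joining $h=h_0$ to $h'=h_1$, together with the associated equidistant foliations, which exist and are unique by the proposition of Section~3.2. It then suffices to show that $t\mapsto W(M,h_t)$ is non-decreasing, and strictly increasing unless $u\equiv 0$.

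To differentiate I would apply the variational formula \eqref{eq:var2} to the deformation of the data at infinity induced by the path. Here $I^{*}=h_t$, so $\partial_t I^{*}=2u\,I^{*}$; the key simplification is that the trace-free part of $\II^{*}$, the real part of the holomorphic quadratic differential attached to the fixed manifold $M$, is a covariant tensor independent of $t$ and contributes nothing to the bracket in \eqref{eq:var2}. A short computation then identifies the integrand with $\partial_t H^{*}$, where $H^{*}=\tr_{I^{*}}\II^{*}$, so that
\begin{equation*}
\frac{d}{dt}W(M,h_t)=-\frac14\int_{\dr M}\partial_t H^{*}\,da_t~.
\end{equation*}
Invoking the Gauss relation at infinity $H^{*}=-K^{*}$ together with the conformal change of curvature recalled at the start of Section~2, namely $K^{*}_t=e^{-2tu}(K_h+t\Delta u)$, this becomes
\begin{equation*}
\frac{d}{dt}W(M,h_t)=-\frac12\int_{\dr M}u\,K^{*}_t\,da_t~,
\end{equation*}
where $K^{*}_t$ denotes the curvature of $h_t$. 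As a check, a constant $u=\rho$ gives back $W(M,e^{2\rho}h)-W(M,h)=-2\pi\rho\,\chi(\dr M)$ from Corollary~\ref{cr:add}, and imposing the area normalization recovers the characterization of the curvature $-1$ representative as a maximizer.

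The heart of the matter, and the step I expect to be the main obstacle, is to establish that $\int_{\dr M}u\,K^{*}_t\,da_t\leq 0$ along the whole path, with equality only for $u\equiv 0$; this is exactly what forces $W(M,h')\geq W(M,h)$. A naive estimate is hopeless, since $K^{*}_t$ need not be signed, so this is where a maximum-principle argument in the spirit of Lemma~\ref{lm:conf-compare} must enter: I would use the Gauss relation $-K^{*}=\tr B^{*}$ and the convexity of the equidistant leaves to constrain the sign of $K^{*}_t$ on the region where $u$ is concentrated, arguing at the extrema of the relevant functions just as in the proof of Lemma~\ref{lm:conf-compare}. Once the sign is secured, the strict inequality in the equality case is immediate, because $\int_{\dr M}K^{*}_t\,da_t=2\pi\chi(\dr M)<0$, so $K^{*}_t$ cannot vanish identically and the integral is strictly negative unless $u\equiv 0$. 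I expect all the remaining ingredients — the reduction to $u\geq 0$ and the variational computation above — to be routine once \eqref{eq:var2} and the Gauss relation at infinity are in place.
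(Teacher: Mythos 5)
Your reduction and your first--variation computation are essentially sound: writing $h'=e^{2u}h$ with $u\geq 0$ (Lemma \ref{lm:conf-compare}-style), interpolating by $h_t=e^{2tu}h$, and feeding this into (\ref{eq:var2}) does give
$$ \frac{d}{dt}W(M,h_t)=-\frac14\int_{\dr M}\partial_t H^*\,da_t=-\frac12\int_{\dr M}u\,K^*_t\,da_t~, $$
though for a slightly different reason than you state: the trace-free part $\II^*_0$ is \emph{not} independent of $t$ (it changes by trace-free Hessian terms under conformal rescaling), but only its trace enters the bracket in (\ref{eq:var2}), and differentiating the identity $\tr_{I^*_t}\II^*_0(t)=0$ shows that its contribution vanishes. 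The genuine gap is exactly the step you defer to a hoped-for maximum principle: the inequality $\int u\,K^*_t\,da_t\leq 0$. No such argument can exist, because the inequality, and with it the monotonicity you are trying to prove, is false. Integrate your own formula in $t$, using $K^*_t\,da_t=(K_h+t\Delta_h u)\,da_h$ and $\int u\,\Delta_h u\,da_h=\int \|du\|_h^2\,da_h$ (the paper's sign convention for $\Delta$); you get the closed Liouville-type expression
$$ W(M,h')-W(M,h)=-\frac12\int_{\dr M}u\,K_h\,da_h-\frac14\int_{\dr M}\|du\|^2_h\,da_h~. $$
(The negative sign of the gradient term is forced: were it nonnegative, the constant-curvature metric could not be the maximum of $W$ at fixed area, a characterization the paper itself recalls in Section 3.) Now two observations. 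If $K_h>0$ on some open set (nothing in the statement forbids this) and $u\geq 0$ is supported there, the right-hand side is negative. Even for $h$ hyperbolic, taking $u=Av$ with $v\geq 0$ smooth nonconstant and $A$ large gives $\frac{A}{2}\int v\,da_h-\frac{A^2}{4}\int\|dv\|^2_h\,da_h<0$. In both cases $h'\geq h$ but $W(M,h')<W(M,h)$, so the Proposition as stated requires an additional hypothesis, and a fortiori no maximum principle can rescue your path-monotonicity. What \emph{is} true, and is all that the proof of Theorem \ref{tm:compare} needs, is your computation based at $h'$ instead of $h$: since the Dirichlet energy is conformally invariant, the same identity reads $W(M,h')-W(M,h)=-\frac12\int u\,K_{h'}\,da_{h'}+\frac14\int\|du\|^2_{h'}\,da_{h'}$, which is nonnegative as soon as $u\geq 0$ \emph{and} $K_{h'}\leq 0$; in the application one takes $h'=\hgr$, whose curvature lies in $[-1,0]$.

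Note also that your route genuinely differs from the paper's. The paper never interpolates through conformal factors at infinity: it uses the horoball-envelope description to show that $h'\geq h$ forces the Epstein surface $S_{h,r}$ to lie inside $S_{h',r}$, reduces the Proposition to $W(S,S')\geq 0$ for nested surfaces, and computes the first variation of $W$ along a family of nested surfaces inside $M$, claiming the pointwise-nonnegative integrand $\Delta u+2u+u\cW$. Be aware, however, that this rests on the formula $\delta H=\Delta u+2u$, which omits a term $-u\,\tr(B^2)$; once restored, the paper's integrand becomes $\Delta u-2uK$ with $K$ the intrinsic curvature of the leaf --- precisely the surface-level counterpart of your $-2u\,K^*_t$, since the curvature of $S_{h,r}$ is asymptotic to a positive multiple of $e^{-2r}K_h$. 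So the obstruction you flagged honestly is not an artifact of working at infinity; it is a genuine defect of the statement in this generality, which your argument and the paper's hit at the same point, and which is repaired by adding the curvature hypothesis on $h'$ indicated above.
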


The proof of this proposition will follow the next two lemmas.

\begin{lemma} \label{lm:hh'}
Let $h,h'$ be two metrics in the conformal class at infinity on $\dr M$.
Suppose that $h'$ is everywhere at least as large as $h$. Let $r$ be 
large enough so that both $S_{h,r}$ and $S_{h',r}$ are well-defined. Then
$S_{h,r}$ is in the interior of $S_{h',r}$.
\end{lemma}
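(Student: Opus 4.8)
The plan is to reduce the statement to a pointwise comparison of the horoballs entering Epstein's envelope construction recalled above. Since $h$ and $h'$ lie in the same conformal class and $h'$ is everywhere at least as large as $h$, I write $h'=e^{2\phi}h$ with $\phi\geq 0$. Lifting to the universal cover, both $h$ and $h'$ are metrics of type $I^*$ on the domain of discontinuity $\Omega\subset\partial_\infty H^3$, and each produces, for every $x\in\Omega$ and every $r$, a horoball tangent to $\partial_\infty H^3$ at $x$,
$$ H^h_{x,r}=\{y\in H^3 : h_y\geq e^{2r}h \text{ at } x\}~,\qquad H^{h'}_{x,r}=\{y\in H^3 : h_y\geq e^{2r}h' \text{ at } x\}~, $$
where $h_y$ is the visual metric seen from $y$. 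The lift of $S_{h,r}$ is the boundary of $U^h_r:=\bigcup_{x\in\Omega}H^h_{x,r}$, and similarly $S_{h',r}$ is the boundary of $U^{h'}_r:=\bigcup_{x\in\Omega}H^{h'}_{x,r}$.

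The key step is the inclusion $H^{h'}_{x,r}\subseteq H^h_{x,r}$ for every $x$ and $r$. At the point $x$ the metrics $h$, $h'$ and $h_y$ are all mutually conformal, so the defining conditions compare as scalars: since $e^{2r}h'\geq e^{2r}h$ at $x$, the condition $h_y\geq e^{2r}h'$ implies $h_y\geq e^{2r}h$. Equivalently $H^{h'}_{x,r}=H^h_{x,r+\phi(x)}$, the horoball at $x$ pushed toward infinity by the nonnegative amount $\phi(x)$; either way $H^{h'}_{x,r}\subseteq H^h_{x,r}$.

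Taking the union over $x\in\Omega$ gives $U^{h'}_r\subseteq U^h_r$, hence for the complementary (convex core) sides $H^3\setminus U^h_r\subseteq H^3\setminus U^{h'}_r$. These inclusions are equivariant under the Kleinian group uniformizing $M$, so they descend to $M$: the region bounded on the core side by $S_{h,r}$ is contained in the region bounded on the core side by $S_{h',r}$, and therefore $S_{h,r}$ lies in the interior of $S_{h',r}$. Choosing $r$ large enough that both foliations are defined — i.e. that both horoball unions have smooth, embedded, convex boundary — makes all of this rigorous.

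The main obstacle is the second paragraph: one must translate the pointwise order $h'\geq h$ of the boundary metrics into the inclusion of the Epstein horoballs, which uses the visual-metric characterization of $H_{x,r}$ together with the fact that all three metrics are conformal at the single point $x$. Once this pointwise inclusion is established, the passage to the unions, to the complements, and down to $M$ is purely set-theoretic; the only remaining care is to fix the orientation convention so that \emph{interior} is read as the convex-core side, consistently with the normalization $W(M,e^{2\rho}h)=W(M,h)-2\pi\rho\chi(\dr M)$ of Corollary \ref{cr:add}.
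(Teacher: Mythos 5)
Your proof is correct and follows essentially the same route as the paper: both use Epstein's horoball description of $S_{h,r}$, deduce the pointwise inclusion of horoballs from $h'\geq h$ (the paper states this in one line; you justify it via the conformal factor $\phi\geq 0$), and then pass to unions and complements to conclude. Your extra observation $H^{h'}_{x,r}=H^h_{x,r+\phi(x)}$ and the explicit equivariance remark are just finer detail on the paper's argument, not a different method.
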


\begin{proof}
We have seen above (just after Proposition \ref{pr:33}) that $S_{h,r}$ can be defined as 
the boundary of the complement of the union of the horoballs
associated to $h$ of ``radius'' $r$ at points of $\dr M$.  
Since $h'$ is everywhere at least as large as $h$, the horoball
associated to $h'$ of radius $r$ is at each point contained in the
horoball associated to $h$ of radius $r$. It follows that 
$M_{h,r}\subset M_{h',r}$.
\end{proof}

\begin{defi}
Let $E$ be a hyperbolic end, let $S,S'$ be two surfaces in $E$
such that $S$ is contained in the ``interior'' of $S'$. We set
$$ W(S,S') = V(S,S') - \frac{1}{4} \int_{S'} Hda +
\frac{1}{4}\int_S Hda~. $$
\end{defi}

It follows from this definition that if $E$ is an end of $M$
containing two surfaces $S$ and $S'$ with $S$ contained in the
interior of $S'$, if $h$ is the metric at infinity in the conformal
class at infinity on $\dr M$
corresponding to $S$ in $\dr E$ and $h'$ is another metric 
in the conformal class at infinity on $\dr M$, equal to $h$
except that it corresponds to $S'$ in $\dr E$, then
$$ W(M,h')=W(M,h)+W(S,S')~. $$

\begin{lemma}
If $S$ is contained in the interior of $S'$ and the induced metrics on both $S$
and $S'$ have non-positive curvature, then $W(S,S')\geq 0$,
with equality only if $S=S'$.
\end{lemma}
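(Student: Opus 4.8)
The plan is to realize $W(S,S')$ as the integral of the first variation of $W$ along a foliation of the region $\Omega$ bounded by $S$ and $S'$, and then to use the special geometry of the surfaces involved to fix the sign of the integrand.

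First I would reduce to an infinitesimal statement. Choose a foliation $(S^u)_{u\in[0,1]}$ of $\Omega$ by convex surfaces with $S^0=S$ and $S^1=S'$; writing $N^u$ for the convex region bounded by $S^u$, the definition of $W(S,S')$ together with the additivity $W(M,h')=W(M,h)+W(S,S')$ identifies $W(S,S')$ with $W(N^1)-W(N^0)=\int_0^1 \frac{d}{du}W(N^u)\,du$. It therefore suffices to control the sign of $\frac{d}{du}W(N^u)$. For a deformation whose normal speed is a function $\psi$ on $S^u$, the standard formulas give $I'=2\psi\II$ and $\II'=-\hess\psi+\psi(\III+I)$; substituting these into (\ref{eq:var1}) and using the Gauss equation (which turns $\tr(B^2)+2-H^2$ into $-2K$) gives, after the $\Delta\psi$ term integrates away on the closed surface $S^u$, exactly the computation of Lemma \ref{lm:linear} with a non-constant speed:
\begin{equation*}
\frac{d}{du}W(N^u)=-\int_{S^u}\psi\,K\,da~,
\end{equation*}
where $K$ is the intrinsic curvature of the leaf (specializing to $-2\pi\chi$ when $\psi\equiv 1$). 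Equivalently, by the co-area formula $W(S,S')$ equals the integral over $\Omega$ of the intrinsic curvatures of the leaves. This reduces the statement to proving $\int_{S^u}\psi K\,da\le 0$ for a suitable foliation.

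The natural choice is the equidistant foliation flowing outward from $S$, capped at $S'$, so that $\psi\equiv 1$ on the moving part. The main obstacle, and the real content, is the sign of $K$. Here I would use that the surfaces at infinity are leaves of an equidistant foliation, i.e. envelopes of horospheres, and therefore have principal curvatures $\kappa_i\le 1$ (each leaf is tangent from its convex side to horospheres, whose curvature is $1$). Along the outward equidistant flow the principal curvatures evolve by $\kappa_i(t)=(\kappa_i+\tanh t)/(1+\kappa_i\tanh t)$, which preserves $\kappa_i\le 1$; hence every leaf satisfies $\det B\le 1$, that is $K\le 0$. Consequently $\frac{d}{du}W(N^u)=-\int_{S^u}\psi K\,da\ge 0$ and $W(S,S')\ge 0$. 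Two technical points remain: the corner contributions along the curve where the equidistant part meets the stationary part lying on $S'$ (to be dispatched by smoothing, or by checking these terms have the right sign), and the fact that the outward flow from $S$ fills $\Omega$ without focal points, which holds because $S$ is convex.

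For the equality case I would invoke Gauss–Bonnet: since $g\ge 2$, $\int_{S^u}K\,da=2\pi\chi(S)<0$, so $K<0$ on a set of positive measure on every leaf. Thus if $S\ne S'$ the moving part is non-empty for a range of $u$ and $-\int_{S^u}\psi K\,da>0$ there, forcing $W(S,S')>0$; equality forces the flow to be stationary, i.e. $S=S'$. I expect the delicate step to be precisely the control of $K$: establishing $\kappa_i\le 1$ for the relevant surfaces and propagating it along the flow. The inequality is simply false for arbitrary convex surfaces (e.g. small geodesic spheres, where $\kappa_i>1$ and $K>0$), so the argument must genuinely exploit that $S$ and $S'$ arise from the horosphere–envelope construction near infinity.
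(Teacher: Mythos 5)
Your overall strategy is the same as the paper's: interpolate monotonically from $S$ to $S'$ and control the sign of the first variation of $W$ along the interpolating family. But your variation formula and the paper's disagree, and yours is the correct one. The paper substitutes $\delta H=\Delta u+2u$, which drops a term: the correct formula is $\delta H=\Delta u+2u-u\,\tr(B^2)$, and with it the integrand in $\delta W$ is not the paper's manifestly non-negative $\Delta u+2u+u(k_1-k_2)^2$ but rather $\Delta u+2u-2u\det B=\Delta u-2uK$, i.e. (up to an overall positive constant, immaterial here) exactly what you derived. An internal consistency check confirms this: for the equidistant flow from a totally geodesic surface ($u\equiv 1$, $k_1=k_2=\tanh r$), Lemma \ref{lm:linear} says $dW/dr$ is the constant $-2\pi\chi$, which is what $-c\int_{S_r} K\,da$ gives by Gauss--Bonnet, whereas the paper's integrand would give $\tfrac12\area(S_r)=\tfrac12\cosh^2(r)\area(S_0)$, which grows with $r$. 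So you have correctly isolated the real content of the lemma, which the paper's own computation bypasses through this slip: the sign of $\delta W$ is governed by the intrinsic curvature of the leaves, special geometric input about the surfaces is indispensable, and your remark that the inequality is simply false for arbitrary nested convex surfaces (geodesic spheres, where $W$ strictly decreases under outward flow) is exactly right.

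However, your resolution of the curvature issue has a genuine gap: the claimed bound $\kappa_i\le 1$ from horoball tangency is wrong-sided. In the envelope construction the tangent horoballs $H_{x,r}$ are centered at points $x$ of the boundary at infinity and lie on the \emph{infinity} side of the leaf (the leaf is the boundary of their union), so tangency yields the lower bound $\kappa_i\ge -1$ with respect to the outward normal, not the upper bound $\kappa_i\le 1$. Indeed $\kappa_i\le 1$ can fail: diagonalizing, the shape operator of the leaf associated to a metric $h$ at infinity has eigenvalues $(1-e^{-2r}\lambda_i)/(1+e^{-2r}\lambda_i)$, where $\lambda_i$ are the eigenvalues of $B^*$, so $\kappa_i>1$ wherever $B^*$ has a negative eigenvalue, and nothing forces $B^*\ge 0$ (only $\tr B^*=-K^*$ is constrained). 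What is true, and what your argument actually needs, is the determinant bound: $\det B_r\le 1$, equivalently $K\le 0$ on the leaf, and from the same formula this holds (for large $r$) precisely when $\tr B^*\ge 0$, i.e. when the conformal metric $h$ at infinity has non-positive curvature. That hypothesis is available in the cases the paper actually uses for Theorem \ref{tm:compare} (the hyperbolic metric $h_0$ and the grafting metric $\hgr$, whose curvature lies in $[-1,0]$); granted it, your propagation step is correct (the outward equidistant flow does preserve $\det B\le 1$, as your evolution formula for the $\kappa_i$ shows), and your Gauss--Bonnet argument for strictness goes through. But as written your initial curvature bound is unjustified and false in general, and for arbitrary $h\le h'$ in the conformal class --- the full generality of Proposition \ref{pr:conf-change} and of the lemma as stated --- neither your argument nor the paper's establishes the required sign without an additional idea.
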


\begin{proof}
We first construct a smooth one-parameter family of surfaces, $(S_t)_{t\in [0,1]}$, with 
$S_0=S$, $S_1=S'$, and such that, for $t\leq t'$, $S_t$ is contained in 
the interior of $S_{t'}$ and that $S_t$ has an induced metric of non-positive curvature. 

For this we will use the fact that given a surface $S\subset M$ is associated to 
a metric $h$ in the conformal class at infinity, then the curvature of $h$ has the
same sign as the curvature of $S$ at the corresponding point (the correspondence
being through the hyperbolic Gauss map). This is because the curvature of $h$
is equal to $K_h=\frac K{\det(E+B)}$,
where $K$ is the curvature of the induced metric on $S$ (see \cite{volume}[Lemma 5.2]) and
$\det(E+B)\geq 0$ if $S$ corresponds to a metric $h$ at infinity.

Now consider the metrics at infinity $h, h'$ corresponding to $S,S'$ respectively.
They are conformal and $h\leq h'$ at each point by Lemma \ref{lm:hh'}, so we can write
$h'=e^{2u}h$ for a function $u:\partial M\to \R_{\geq 0}$. Then $K_{h'}=e^{-2u}(K_{h}+\Delta u)$
and both $K_h$ and $K_{h'}$ are non-positive, so $K_{h}\leq 0$ and $K_{h}+\Delta u\leq 0$.
For all $t\in [0,1]$ consider the metric $h_t=e^{2tu}h$. It is conformal to $h$ and $h'$,
with curvature $K_{h_t}=e^{-2tu}(K_{h}+t\Delta u)\leq 0$. So $h_t$ corresponds to a surface
$S_t$ with non-positive curvature. 
The monotonicity of $(h_t)$ and Lemma \ref{lm:hh'} show that the $S_t$ provide
a foliation of the domain of $M$ between $S$ and $S'$, as required.

It is now sufficient to prove that 
$$ \frac d{dt} W(S,S_t)\geq 0~, $$
with equality only if $S_t$ is stationary.

Consider now a fixed value of $t$, and suppose that the normal 
first order deformation of $S_t$ is given by $uN$, where $N$
is the unit exterior normal to $S_t$ and $u$ is a non-negative
function on $S_t$. We know (see \cite[Eq. (41)]{volume}) that
the first-order variation of $W(S_t)$ is given by
$$ \delta W(S_t) = \frac{1}{4}\int_{S_t} \delta H + \langle \delta I,
\II - \frac{H}{2} I\rangle da~, $$
where $\delta H$ is the first-order variation of $H$ and
$\delta I$ is the first-order variation of the induced metric.

Now a direct and classical computation shows that 
$$ \delta \II = -Hess(u) +u(\III+I)~, $$
while
$$ \delta I = 2u\II~. $$
It follows that 
$$ \delta H = \tr_I(\delta \II) - \langle \delta I, \II\rangle 
= \Delta u +2u -u\tr_I(\III)~. $$
Therefore
$$ \delta W(S_t) = \frac{1}{4} \int_{S_t} \Delta u + 2u +\langle u\II,
\II- H I\rangle da~, $$
so
$$ \delta W(S_t) = \frac{1}{4} \int_{S_t} \Delta u + 2u -2 u\det(B) da =
\frac{1}{4} \int_{S_t} \Delta u - 2uK da~. $$
But the integral of $\Delta u$ is zero while the other term is non-negative, 
and the result follows.
\end{proof}

The proof of Proposition \ref{pr:conf-change} clearly follows from this lemma.

\begin{remark}
Sergiu Moroianu pointed out that a simpler proof of Proposition \ref{pr:conf-change}
can be obtained once one knows that $W$ satisfies a ``Polyakov formula'' as in 
\cite{renormvol}[(1), p.2], a fact that we do not use or prove here but which
is true.
\end{remark}

\section{Proof of Theorem \ref{tm:compare}}

\subsection{The upper bound on $V_R$}

Let $\hgr$ be the ``grafting metric'' on $\dr_\infty M$ (it is also sometimes
called the Thurston metric). Recall
that $\hgr$ is a metric with curvature in $[-1,0]$ in the 
conformal class at infinity. In the simplest case where the support of $l$ is
a simple closed curve $c$, with a weight $w$, $\hgr$ is obtained by cutting 
$(\partial M, m)$ along the geodesic realizing $c$ and gluing in a flat
strip of width $w$.

\begin{lemma}
Let $m$ and $l$ be the induced metric and the measured bending lamination
on the boundary of the convex core of $M$. Then
$$ W(M,\hgr/2) = V_C(M) - \frac{1}{4}L_m(l)~. $$
\end{lemma}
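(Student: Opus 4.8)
The plan is to identify $\hgr$ as the metric at infinity of the equidistant foliation whose innermost leaf is the boundary of the convex core, and then to read off $W(M,\hgr)$ from the definition of $W(M,h)$ by letting that foliation collapse onto $\dr C$. Concretely, I would first recall that the surfaces $S_r$ equidistant at distance $r>0$ from the convex core $C$ form an equidistant foliation whose metric at infinity $I^*=\lim_{r\to\infty}e^{-2r}I_r$ is exactly the grafting metric $\hgr$: on the totally geodesic pieces of $\dr C$ the rescaled induced metrics converge to a hyperbolic metric, while around each bending line the equidistant surface is a tubular piece which opens up, in the limit, into a flat strip whose width records the transverse bending measure. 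This is the defining feature of the grafting/Thurston metric as an envelope of horospheres (see \cite{Eps}). By the uniqueness part of the proposition on equidistant foliations with prescribed metric at infinity, this is the foliation associated to $\hgr$, so by the definition of $W(M,h)$ we have, for every $r>0$,
$$ W(M,\hgr)=W(S_r)+2\pi r\chi(\dr M)~. $$

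Next I would evaluate the right-hand side by pushing $r\to 0$. Writing $N_r$ for the $r$-neighbourhood of $C$, so that $S_r=\dr N_r$, the definition of $W$ gives $W(S_r)=V(N_r)-\frac14\int_{S_r}H\,da$, and hence $W(M,\hgr)=V(N_r)-\frac14\int_{S_r}H\,da+2\pi r\chi(\dr M)$. As $r\to 0$ the volume $V(N_r)$ tends to $V(C)=V_C(M)$ and the term $2\pi r\chi(\dr M)$ tends to $0$, so the entire content reduces to the claim $\int_{S_r}H\,da\to L_m(l)$. On the pieces of $S_r$ equidistant from the totally geodesic part of $\dr C$ one has $H=2\tanh r$ against bounded area, so these contribute $\sinh(2r)\,\mathrm{Area}_m(\dr C)\to 0$. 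The only surviving contribution comes from the tubular pieces around the bending locus: for a bending line of length $\ell$ and bending angle $\theta$, the induced metric of the tube is $\cosh^2 r\,dz^2+\sinh^2 r\,d\phi^2$ with $\phi$ ranging over the rounded exterior angle $[0,\theta]$, and the principal curvatures $\tanh r,\coth r$ give $\int_{\mathrm{tube}}H\,da=\cosh(2r)\,\theta\ell=\cosh(2r)\,L_m(l)\to L_m(l)$. Combining these contributions yields $W(M,\hgr)=V_C(M)-\frac14 L_m(l)$.

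The main obstacle is to make this last limit rigorous for an arbitrary measured bending lamination rather than a single weighted simple closed curve, together with the non-smoothness of $\dr C$. Since Lemma \ref{lm:linear} is proved only for smooth convex boundaries, I would apply it on an interval $[\epsilon,r]$ and let $\epsilon\to 0$, checking that $W(S_\epsilon)+2\pi\epsilon\chi(\dr M)$ converges to $V_C(M)-\frac14 L_m(l)$; for a general $l$ I would approximate it by weighted simple closed curves $l_n\to l$, for which the tube computation above is exact, and invoke continuity of $V_C$, of $L_m(\cdot)$, and of the integrated mean curvature of the equidistant surfaces under this approximation. The geometry of the rounded bending locus — a tube of angular width equal to the transverse bending measure — is precisely what feeds the identity $\int H\,da\to L_m(l)$ in the limit, and the identification of $\hgr$ with the metric at infinity of the convex-core foliation is the remaining point that must be cited carefully rather than recomputed.
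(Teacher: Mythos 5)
Since the paper states this lemma without proof (the paragraph that follows it only \emph{applies} it; the result is imported from \cite{volume,review}), there is no in-paper argument to compare against. Your strategy --- realize $\hgr$ as the metric at infinity of the foliation by surfaces $S_r$ equidistant to $\dr C$, then let $r\to 0$ in the definition of $W(M,\cdot)$ --- is exactly the argument of those references, and your local computations are right: the part of $S_r$ over the totally geodesic locus contributes $\sinh(2r)\,\area_m(\dr C)\to 0$ to $\int H\,da$, and the tube over a bending line contributes $\cosh(2r)\,\theta\ell\to\theta\ell$. However, two steps have genuine gaps.

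First, the normalization. With the paper's definition $I^*=\lim_{r\to\infty}e^{-2r}I_r$, the foliation by surfaces at distance $r$ from $\dr C$ has metric at infinity $\frac{1}{4}\hgr$, not $\hgr$: both $e^{-2r}\cosh^2 r$ and $e^{-2r}\sinh^2 r$ tend to $\frac{1}{4}$, so the totally geodesic pieces limit to $\frac{1}{4}m$ and the tubes to strips scaled by $\frac{1}{4}$. Equivalently, the Epstein foliation associated to $\hgr$ consists of the surfaces at distance $r+\log 2$ from $\dr C$. This factor does not wash out of the final statement: by Corollary \ref{cr:add}, $W(M,\hgr)$ and $W(M,\frac{1}{4}\hgr)$ differ by $-2\pi\log 2\,\chi(\dr M)\neq 0$, so the limit you compute is $W(M,\frac{1}{4}\hgr)$, and your argument as written proves an identity differing from the lemma by an additive constant. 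The lemma as stated holds in the normalization of \cite{volume} (the one forced by the equality case of Theorem \ref{tm:compare}, which requires $V_R=0$ for Fuchsian manifolds); a correct proof must either adopt that normalization explicitly or track the $\log 2$ shift. Be warned that the constants in the present text are not internally consistent --- in Lemma \ref{lm:linear} the prefactor $\frac{1}{4}$ of Lemma \ref{lm:var1} is dropped, and $\tr(B^2)+2-H^2$ equals $2-2\det B$, not $2-\det B$ --- so this factor-chase cannot be settled against this paper alone.

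Second, general laminations. Your proposed approximation $l_n\to l$ by weighted simple closed curves cannot be performed inside the fixed manifold $M$: the bending lamination is an invariant of $M$, so realizing $l_n$ means producing different quasifuchsian manifolds $M_n$ (Bonahon--Otal) and then proving continuity of $V_C$, of the bending length, and of $\int_{S_r}H\,da$ under geometric convergence $M_n\to M$ --- far heavier machinery than the lemma needs, and those continuity claims are themselves not free. The standard route stays inside $M$: for an arbitrary measured bending lamination the equidistant surfaces satisfy the exact identities $\area(S_r)=\cosh^2 r\,\area(\dr C)+\frac{1}{2}\sinh(2r)\,L_m(l)$ and $\int_{S_r}H\,da=\sinh(2r)\,\area(\dr C)+\cosh(2r)\,L_m(l)$ (Epstein--Marden's analysis of convex hull boundaries, which also underlies \cite{bridgeman}). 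With these, your $r\to 0$ limit goes through for every $l$ with no approximation argument at all.
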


\begin{proof}
We prove first that the metric at infinity corresponding to the foliation by
the equidistant surfaces from the convex core is $h_{gr}/2$. It is sufficient 
to do the proof when the measured bending lamination is along closed curves, 
since the general case then follows by density.

Let $S_\rho$ be the equidistant surface at distance $\rho$ from the convex core.
A standard computation in hyperbolic geometry shows that:
\begin{itemize}
\item on the parts of $S_\rho$ that project to the complement of the support of
the bending lamination $l$, the induced metric is $\cosh(\rho)^2m$, where $m$ 
is the pull-back  of the induced metric on $\partial C(M)$ on $S_\rho$ by the projection,
\item on the parts of $S_\rho$ projecting to the support of $l$, 
$I_\rho=\cosh(\rho)^2 m + \sinh(\rho)^2d\theta^2$, where $\theta$ is the angle variable 
on the normals to the support planes of $C(M)$ along $l$.
\end{itemize}

So it follows from Definition \ref{df:I*} that:
\begin{itemize}
\item on the parts of $S_\rho$ that project to the complement of the support of
$l$, 
$$ I^* = \lim_{\rho\to\infty} 2e^{-2\rho} \cosh(\rho)^2 m = \frac 12 m~, $$ 
\item on the parts of $S_\rho$ projecting to the support of $l$, 
$$ I^* = \lim_{\rho\to\infty} 2e^{-2\rho} (\cosh(\rho)^2 m + \sinh(\rho)^2d\theta^2)
= \frac 12 (m+d\theta^2)~. $$
\end{itemize}
This is precisely the metric $h_{gr}/2$.

The result therefore follows from the definition of $W(M,h)$, Definition \ref{df:WW}, 
for $r=0$.
\end{proof}

Let $h_0$ be the hyperbolic metric in the conformal class at infinity
of $M$. It follows from Lemma \ref{lm:conf-compare} that $\hgr\geq h_0$ at all points
of $\dr_\infty M$, and Proposition \ref{pr:conf-change} therefore indicates that 
$W(M,h_0)\leq W(M,\hgr)$. Since $V_R(M)=W(M,h_0)$, we find using Corollary \ref{cr:add} that
$$ V_R(M)\leq W(M,h_{gr}) = W(M,h_{gr}/2) -\pi\frac{\log 2}2 \chi(\partial M) = V_C(M) - L_m(l)/4 
+ \pi\frac{\log 2}2 |\chi(\partial M)|~. $$ 

\subsection{The lower bound on $V_R$}

The area of $\hgr$ is equal to $-2\pi \chi(\dr M) + L_m(l)$.
Therefore, the metric 
$$ \hgr' := \frac{-2\pi \chi(\dr M)}{-2\pi \chi(\dr M) + L_m(l)} \hgr $$
has area equal to $-2\pi \chi(\dr M)$, which is equal to the area of $h_0$. 
Since $V_R(M)$ is the maximum over $W(M,h)$ for $h$ a metric in the conformal
class at infinity of area equal to the area of $h_0$ (see \cite{volume,review}) we
find that $$ W(M, \hgr') \leq V_R(M)~. $$

However Corollary \ref{cr:add} indicates that 
$$ W(M, \hgr') = W(M,\hgr) - \pi 
\log\left(\frac{-2\pi \chi(\dr M)}{-2\pi \chi(\dr M) + L_m(l)}\right) \chi(\dr M)~. $$
It is also known that, if $M$ has incompressible boundary, then 
$L_m(l)\leq C(M)$. It follows that 
$$ W(M, \hgr) \leq V_R(M) + C'(M)~, $$
where $C'(M)$ is a constant which can easily be explicitly computed in terms 
of $C(M)$ and of $\chi(\partial M)$. This concludes the proof of Theorem \ref{tm:compare}.

\section{Proof of Theorem \ref{tm:upper}}

\subsection{The Bers embedding}
\label{ssc:bers}
We recall here a the basic setup of the Bers embedding. We consider 
a quasifuchsian hyperbolic 3-manifold $M\simeq S\times \R$, denote by $c_+$ and 
$c_-$ the complex structures at $+\infty$ and $-\infty$, respectively,
and by $\sigma_+$ and $\sigma_-$ the complex projective structures at infinity.
We also call $\sigma_+^F$, $\sigma_-^F$ the Fuchsian complex projective
structures with underlying complex structures $c_+, c_-$ on $S$. We
can then define two holomorphic quadratic differentials 
$$ q_-=\sigma_--\sigma_-^F~,~~ q_+=\sigma_+-\sigma_+^F~, $$
where the minus sign refers to the comparison of two complex 
projective structures on a given Riemann surface using the Schwarzian
derivative, see \cite{dumas-survey}.

Then, if $q$ is the holomorphic quadratic differential on $(\partial M, c)$ corresponding to
$q_\pm$ on the corresponding boundary component of $M$, we have (see \cite[Lemma 8.3]{volume})
$$ \II^*_0 = - Re(q)~, $$
that is, the real part of $q$ is minus the traceless part of the second fundamental form at infinity.

We now fix the conformal structure $c_-$ on the lower boundary at infinity
of $M$, and vary $c_+$. Each choice of $c_+$ determines a complex projective
structure $\sigma_-$ on the lower boundary at infinity of $M$, and
therefore a holomorphic quadratic differential $q_-\in Q_{c_-}$. This
defines a map $B_+:\cT_+\rightarrow Q_{c_-}$, called the Bers embedding.

Using the hyperbolic metric $h_-$ in the conformal
class of $c_-$, we can measure at each point of $S$ the norm of $q_-$.
We call $Q_{c_-}^\infty$ the vector space $Q_{c_-}$, endowed with this
$L^\infty$ norm. 

\begin{theorem}[Nehari] \label{tm:nehari}
The image of $B_+$ contains the ball of radius $1/2$, and is contained in the ball of 
radius $3/2$ in $Q^\infty_{c_-}$.
\end{theorem}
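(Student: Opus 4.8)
The plan is to reduce the statement to the two classical inequalities of Nehari for the Schwarzian derivative of a conformal map of the disk, and then to transport these through the dictionary between the Bers embedding and the Schwarzian. First I would pass to the universal cover: uniformize $(S,c_-)$ as $\mathbb{D}/\Gamma$ with $\Gamma$ Fuchsian, so that the Fuchsian projective structure $\sigma_-^F$ has developing map the inclusion $\mathbb{D}\hookrightarrow\widehat{\C}$. With this normalization the quadratic differential $q_-=\sigma_--\sigma_-^F\in Q_{c_-}$ lifts to $S(f)\,dz^2$, where $f:\mathbb{D}\to\widehat{\C}$ is the developing map of $\sigma_-$ and $S(f)$ is its Schwarzian, a $\Gamma$-automorphic holomorphic function. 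Writing $q_-=\phi(z)\,dz^2$ in the disk coordinate, the $L^\infty$ norm on $Q^\infty_{c_-}$, in the normalization for which the Nehari constants appear, is $\|q_-\|_\infty=\sup_{z\in\mathbb{D}}(1-|z|^2)^2|\phi(z)|$, i.e. the supremum of $|\phi|$ against the Poincar\'e density $(1-|z|^2)^{-1}$. The theorem is then exactly the assertion that this quantity always lies in $[0,6]$ and that every value in $[0,2)$ is attained.

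For the inclusion in the ball of radius $6$, the key observation is that $f$ is univalent. Indeed, $f$ is the restriction to the lower invariant component $\Omega_-$ of the domain of discontinuity of the developing map of the quasifuchsian holonomy, and this map carries $\mathbb{D}$ biholomorphically onto $\Omega_-$; in particular it is injective. Nehari's theorem for univalent functions then gives the pointwise bound $(1-|z|^2)^2|S(f)(z)|\leq 6$ for all $z\in\mathbb{D}$. Taking the supremum yields $\|q_-\|_\infty\leq 6$, and since this holds for every $c_+\in\cT_+$ the image of $B_+$ is contained in the closed ball of radius $6$.

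For the inclusion of the ball of radius $2$, I would run the construction in reverse. Given $q\in Q_{c_-}$ with $\|q\|_\infty<2$, lift it to a $\Gamma$-automorphic function $\phi$ on $\mathbb{D}$ and solve the Schwarzian equation $S(f)=\phi$; this produces a locally injective holomorphic map $f:\mathbb{D}\to\widehat{\C}$, unique up to post-composition by a M\"obius transformation, together with a monodromy representation $\rho:\Gamma\to\mathrm{PSL}(2,\C)$ for which $f$ is $\rho$-equivariant. Since $(1-|z|^2)^2|\phi(z)|<2$, Nehari's univalence criterion shows that $f$ is univalent on $\mathbb{D}$. One then checks that an equivariant univalent developing map forces $\rho(\Gamma)$ to be a quasifuchsian group with $f(\mathbb{D})$ one component of its domain of discontinuity, hence the holonomy of a quasifuchsian manifold $q'\in\cG_S$ whose lower conformal structure at infinity is again $c_-$ and whose lower projective structure has Schwarzian $\phi$; reading off its upper conformal structure $c_+$ gives $B_+(c_+)=q$. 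Thus every such $q$ lies in the image.

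The two analytic inequalities are Nehari's classical theorems and I would invoke them as a black box; the genuinely nontrivial step is the geometric identification in the second direction, namely that a $\Gamma$-equivariant univalent solution of the Schwarzian equation is the developing map of an honest quasifuchsian structure, rather than merely a projective structure with discrete holonomy, and that its lower conformal structure is unchanged. This rests on the deformation theory of Kleinian groups and on Bers simultaneous uniformization, and I expect it, rather than the Nehari estimates themselves, to be the main obstacle. The upper bound, by contrast, is essentially immediate once univalence of the quasifuchsian developing map is recorded.
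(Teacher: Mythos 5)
The paper offers no proof of this statement at all: it is quoted as a classical result with the citation \cite[Theorem 1, p.~134]{gardiner:quasiconformal}. Your first half is the standard argument behind that citation and is correct: the developing map $f$ of $\sigma_-$ is a Riemann map onto the lower invariant component $\Omega_-$, hence univalent, and the Kraus--Nehari inequality gives $\sup_{z}(1-|z|^2)^2|S(f)(z)|\leq 6$; you are also right to flag that the constants $2$ and $6$ belong to the normalization $\|\phi\|_\infty=\sup(1-|z|^2)^2|\phi|$.

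The gap is in the other direction. The step you describe as ``one then checks that an equivariant univalent developing map forces $\rho(\Gamma)$ to be a quasifuchsian group'' is not a check: it is false as stated. By your own first half the image of $B_+$ is a bounded open subset of $Q_{c_-}$, so it has nonempty topological boundary, and every boundary point is the Schwarzian of a $\Gamma$-equivariant \emph{univalent} map with discrete holonomy (a locally uniform limit of equivariant univalent maps is again equivariant and univalent, and algebraic limits of discrete faithful representations are discrete and faithful), yet such a point is \emph{not} in the image of $B_+$, hence not quasifuchsian with lower structure $c_-$. These are Bers' b-groups, including the totally degenerate ones. So univalence plus equivariance plus discreteness cannot deliver quasifuchsianity, and no appeal to ``deformation theory of Kleinian groups'' will close this, because after invoking Nehari's univalence criterion you have discarded the quantitative hypothesis $\|\phi\|_\infty<2$, which is exactly what is needed. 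The missing ingredient is the Ahlfors--Weill extension: for $\|\phi\|_\infty<2$, the associated harmonic Beltrami coefficient on the other side of the circle (in the upper half-plane model, $\mu(\bar z)=-2y^2\phi(z)$) satisfies $\|\mu\|_\infty=\tfrac12\|\phi\|_\infty<1$ and is $\Gamma$-compatible; solving the Beltrami equation yields a quasiconformal self-map $w$ of $\widehat{\C}$, conformal on the lower disk with $S(w)=\phi$ there, which conjugates $\Gamma$ to a group that is quasifuchsian \emph{because it is a quasiconformal deformation of a Fuchsian group}. The quotient of the image of the upper disk then supplies the conformal structure $c_+$ with $B_+(c_+)=\phi$. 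This combination (Kraus--Nehari plus Ahlfors--Weill) is precisely the proof in the reference the paper cites.
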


See \cite[Theorem 2.1]{McMullen} or \cite[Theorem 1, p. 134]{gardiner:quasiconformal}
(but note that in this reference the bound is given for a metric of constant curvature 
$-4$ on the disk).

Consider now on $Q_{c_-}$ the $L^2$-norm for the Weil-Petersson metric,
and denote by $Q_{c_-}^2$ the vector space $Q_{c_-}$ endowed with this
norm.

\begin{cor} \label{cr:nehari}
The image of $B_+$ is contained in the ball of radius $3\sqrt{\pi(g-1)}$
in $Q_{c_-}^2$.
\end{cor}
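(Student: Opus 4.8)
The plan is to compare the two norms on $Q_{c_-}$ pointwise and then integrate, exploiting the fact that the Weil--Petersson $L^2$ norm is obtained by integrating the square of the very same pointwise norm whose supremum defines the $L^\infty$ (Nehari) norm. First I would fix $q=\phi\, dz^2\in Q_{c_-}$ and write the hyperbolic metric in the conformal class of $c_-$ as $h_-=\rho\,|dz|^2$. The pointwise norm of $q$ relative to $h_-$ is the well-defined function $|q|_{h_-}=|\phi|/\rho$ on $S$. With the conventions fixed above, the $L^\infty$ norm on $Q^\infty_{c_-}$ is $\|q\|_\infty=\sup_S |q|_{h_-}$, while the $L^2$ norm on $Q^2_{c_-}$ satisfies $\|q\|_2^2=\int_S |q|_{h_-}^2\, dA_{h_-}$, where $dA_{h_-}$ is the area form of $h_-$.

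The key step is then the elementary bound of the $L^2$ norm by the $L^\infty$ norm on a finite-area space:
$$ \|q\|_2^2 = \int_S |q|_{h_-}^2\, dA_{h_-} \leq \Big(\sup_S |q|_{h_-}\Big)^2 \int_S dA_{h_-} = \|q\|_\infty^2\cdot \area(S,h_-)~. $$
Since $h_-$ is hyperbolic, the Gauss--Bonnet theorem gives $\area(S,h_-)=-2\pi\chi(S)=4\pi(g-1)$, so that
$$ \|q\|_2 \leq 2\sqrt{\pi(g-1)}\,\|q\|_\infty~. $$
Finally I would invoke Theorem \ref{tm:nehari}, which bounds $\|q\|_\infty\leq 6$ for every $q$ in the image of $B_+$, to conclude $\|q\|_2\leq 12\sqrt{\pi(g-1)}$, which is exactly the asserted radius.

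I do not expect a genuine obstacle here: the corollary reduces to the trivial inequality $\|\cdot\|_{L^2}\leq \|\cdot\|_{L^\infty}\cdot(\text{area})^{1/2}$ combined with the Gauss--Bonnet area computation and Nehari's estimate. The only point requiring care is the bookkeeping of normalizations, namely checking that the pointwise norm $|q|_{h_-}$ entering the definition of the $L^\infty$ norm is precisely the one whose square integrates to the Weil--Petersson $L^2$ norm; the exact numerical match of the constant $12\sqrt{\pi(g-1)}=6\cdot 2\sqrt{\pi(g-1)}$ confirms that these conventions are the correct ones.
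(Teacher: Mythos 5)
Your proof is correct and is precisely the argument the paper intends (the paper states the corollary without proof, as an immediate consequence of Nehari's theorem): bound the Weil--Petersson $L^2$ norm by the $L^\infty$ norm times the square root of the hyperbolic area $4\pi(g-1)$, then apply the Nehari bound of $6$, giving $6\cdot 2\sqrt{\pi(g-1)}=12\sqrt{\pi(g-1)}$. Your attention to the normalization matching between the pointwise norm in $Q^\infty_{c_-}$ and the integrand of the $L^2$ norm is exactly the right point of care, and the constant confirms it.
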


\begin{cor} \label{cr:II*}
For all $(c_-, c_+)\in \cT\times \bar \cT$, if $h_-$ is the hyperbolic metric in the 
conformal class of $c_-$, we have 
$$ \left(\int_S \| Re(q_-)\|_{h_-}^2 da_{h_-}\right)^{1/2}\leq 3\sqrt{2\pi (g-1)}~. $$ 
\end{cor}

\begin{proof}
Let $z$ be a complex coordinate at a point of $S$, let $h_-=\rho^2 |dz|^2$, and 
let $q_-=fdz^2$. The Nehari estimate above indicates that $|f|/\rho^2\leq 3/2$. 
However if $f=g+ih$ then $Re(q)=Re(fdz^2)=g(dx^2-dy^2)-2hdxdy$, so that 
$\| Re(fdz^2)\|_{h_-} = \sqrt{2} |f|/\rho^2$. Therefore $\| Re(fdz^2)\|_{h_-} 
\leq 3\sqrt 2/2$ pointwise, and the result follows.  
\end{proof}

\subsection{The upper bound}

We now prove Theorem \ref{tm:upper}. Let $c_-,c_+\in \cT$, and let $d=d_{WP}(c_-,c_+)$.

With the notations introduced here, Proposition \ref{pr:variation} can be written
as follows.

\begin{prop} \label{pr:variation2}
Under a first-order deformation of the hyperbolic structure on $M$,
$$ dV_R = \frac{1}{4} \left(\int_{\partial_-M}\langle Re(q_-), h'_-\rangle_{h_-} da_{h_-}
+ \int_{\partial_+M}\langle Re(q_+), h'_+\rangle_{h_+} da_{h_+}\right)~. $$
\end{prop}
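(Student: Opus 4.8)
The plan is to derive Proposition \ref{pr:variation2} by translating the abstract variational formula of Proposition \ref{pr:variation}, namely $dV_R = -\frac14\langle \II^*_0, \Id^*\rangle$, into the explicit Bers-embedding coordinates set up in this subsection. The duality bracket there pairs the cotangent vector $\II^*_0$ (the traceless part of the second fundamental form at infinity, viewed via Proposition \ref{pr:variation} as the real part of a holomorphic quadratic differential) against the tangent vector $\Id^*$ given by the first-order variation of the conformal structure at infinity. The key identification, recorded just above, is that on each boundary component $\II^*_0 = -Re(q)$, where $q$ is the holomorphic quadratic differential measuring the Schwarzian of the projective structure at infinity relative to the Fuchsian one. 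This is exactly the bridge between the intrinsic geometry of the ends and the Bers data $q_-,q_+$.

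First I would make explicit that $\dr_\infty M$ has two connected components, an upper one carrying $(c_+,\sigma_+,q_+)$ and a lower one carrying $(c_-,\sigma_-,q_-)$. The single pairing $\langle \II^*_0,\Id^*\rangle$ in Proposition \ref{pr:variation} is an integral over all of $\dr_\infty M$, so it splits as a sum of the contributions from the two components. On each component I would substitute $\II^*_0 = -Re(q_\pm)$, and identify $\Id^*$ on that component with the variation $c'_\pm$ of the conformal structure there. The overall sign $-\frac14$ from Proposition \ref{pr:variation} combines with the sign in $\II^*_0=-Re(q)$ to produce the $+\frac14$ appearing in the target formula. Carrying this through on each component yields
$$ dV_R = \frac{1}{4}\left(\langle Re(q_-), c'_-\rangle + \langle Re(q_+), c'_+\rangle\right)~, $$
which is the claimed statement.

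The one point requiring care — and the step I expect to be the main obstacle — is the precise matching of the duality bracket with the Weil-Petersson / Teichm\"uller cotangent pairing, including the correct normalization and sign conventions. One must check that the bracket $\langle\,,\,\rangle$ of Proposition \ref{pr:variation} (the natural pairing between $Q_{c_\pm}$, as the complex cotangent space of $\cT$ at $c_\pm$, and the tangent space) is the same pairing intended in the statement here, so that no stray factor of $2$ or sign flip is introduced when passing from $\Id^*$ to $c'_\pm$. This is essentially a bookkeeping verification relying on the conventions fixed in \cite{volume}, where the relation $\II^*_0 = -Re(q)$ and the variational formula both originate; once those conventions are lined up the computation is immediate. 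The remaining substitutions are purely formal, so after settling the normalization the proof is short.
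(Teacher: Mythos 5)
Your proposal is correct and matches the paper exactly: the paper presents Proposition \ref{pr:variation2} as an immediate rewriting of Proposition \ref{pr:variation}, obtained by splitting the pairing over the two boundary components and substituting $\II^*_0 = -Re(q_\pm)$ (from \cite[Lemma 8.3]{volume}), with the two minus signs cancelling to give the $+\frac{1}{4}$. Your additional caution about matching the duality bracket conventions is reasonable but is not treated as an issue in the paper, which relies on the conventions of \cite{volume} just as you suggest.
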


\begin{proof}[Proof of Theorem \ref{tm:upper}]
Let $c:[0,d]\rightarrow \cT$ be the geodesic segment parameterized at constant 
velocity $1$ between $c_-$ and $c_+$. 
Integrate the equation in the previous proposition with $c_+$ replaced by $c(t)$, $t\in [0,d]$,
in Proposition \ref{pr:variation2} and with $h(t)$ the hyperbolic metric in the conformal class 
of $c(t)$. This shows that
\begin{equation}
  \label{eq:VRdiff}
 V_R(c_-,c_+)=\int_{t=0}^d \frac{1}{4} \int_{\partial_+M}\langle Re (q(t)),h'(t)\rangle_{h(t)} da_{h(t)}dt~,
\end{equation}
where $q(t)$ is the holomorphic quadratic differential equal to the Schwarzian differential
of the identity between the Fuchsian complex projective structure obtained from Riemann
uniformization from $c(t)$, and the quasifuchsian complex projective structure obtained
by applying the Bers double uniformization theorem to $(c_-,c(t))$. 

Denote by $g_{FT}$ the Fischer-Tromba metric on $\cT$, defined by 
$$ g_{FT}(H,H') = \int_S \langle H,H'\rangle_{h} da_h~, $$
where $H,H'$ are two symmetric 2-tensors on $S$. Recall that if $H,H'$ are two
traceless and divergence-free deformations of $h$ corresponding to tangent vectors
$C,C'\in T_c\cT$, then
$$ g_{FT}(H,H') = 8g_{WP}(C,C')~, $$
see e.g. \cite[Eq (3), p10]{cyclic2} for a computation of this relation.

Equation \eqref{eq:VRdiff} can be written
as
$$  V_R(c_-,c_+)=\int_{t=0}^d \frac{1}{4} g_{FT}(Re (q(t)),h'(t))dt~. $$
Therefore, using Corollary \ref{cr:II*},
$$  V_R(c_-,c_+)\leq \int_0^d \frac 14 \| Re(q)\|_{FT}\| h'(t)\|_{FT}dt
\leq \frac 34 \sqrt{2\pi(g-1)}\int_0^d\| h'(t)\|_{FT}dt~, $$
and it follows from the relation $8g_{WP}=g_{FT}$ that 
$$ V_R(c_-,c_+)\leq 3\sqrt{\pi(g-1)} d_{WP}(c_-,c_+)~. $$
\end{proof}

We provide here an alternate proof of the same statement, based on a different
(but obviously equivalent) computation.

\begin{proof}[Alternate proof of Theorem \ref{tm:upper}]
Let $c\in \cT_S$ be a complex structure on $S$, let 
$q$ and $\mu$ be a holomorphic quadratic differential and a Beltrami differential 
on $(S,c)$, and let $h'$ be the first-order variation corresponding to $\mu$
of the hyperbolic metric $h$ in the conformal class defined by $c$. Then a 
direct computation shows that
$$ \int_S \langle Re(q),h'\rangle_h da_h = 4Re\left(\int_S q\mu\rangle\right)~. $$
Applying this relation with $q$ equal to Schwarzian derivative term as above,
and using that $\II^*_0=-Re(q)$, we obtain that for a variation $h'$ of the
hyperbolic metric $h$ in the conformal class on the upper component of the boundary
at infinity, 
$$ dV_R(h') = -\frac 14\int_S \langle \II^*_0,h'\rangle_h da_h = 
\frac 14\int_S\langle Re(q),h'\rangle_h da_h =
Re\left( \int_S q\mu \right)~. $$

Let $z$ be a local complex coordinate, with $h=\rho^2|dz|^2$, then 
we can write
$$ q = q' dz^2~,~~ \mu = \mu' \frac{d\bar z}{dz}~, $$
so that 
$$ dV_R(h') = Re\left(\int_S \left(\frac{q'}{\rho^2}\right) \mu'
\rho^2 |dz|^2\right)~. $$
Using the Nehari estimate (Theorem \ref{tm:nehari}) shows that $|q'/\rho^2|\leq 3/2$, and it 
follows that
$$ |dV_R(h')|\leq \frac 32 \int_S \left| \mu' \right| \rho^2 |dz|^2~, $$
and it follows from the Cauchy-Schwarz inequality that
$$ |dV_R(h')|\leq \frac 32 \| \mu\|_{WP} \sqrt{4\pi (g-1)} = 3\sqrt{\pi (g-1)} \| \mu\|_{WP}~. $$

It is then possible to integrate this inequality on a path from $c_-$ to $c_+$ as in
the first proof above to obtain the result.
\end{proof}

\section{The size of almost flat holomorphic disks}
\label{sc:disk}

In this section we prove Theorem \ref{tm:disk}, giving an upper bound on the radius
of holomorphic disk in $\cT_S$ which are flat enough. The proof is based on a well-known
upper bound on the curvature of $g_{WP}$ and on two key
properties of the renormalized volume, as collected in the next lemma.

\begin{lemma} \label{lm:laplacien}
Let $D$ be a holomorphic disk immersed in $\cT_S$, with induced metric $q$, 
with center $c_0$. Consider the function $u:D\rightarrow \R$ defined by 
$u(c)=V_R(c_0,c)$ for all $c\in D$. Then
\begin{enumerate}
\item $q$ has negative curvature,
\item $\| du\|_q\leq 3\sqrt{\pi(g-1)}$,
\item $\Delta_q u=-2$.
\end{enumerate}
\end{lemma}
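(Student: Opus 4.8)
The plan is to verify the three claimed properties of the function $u(c)=V_R(c_0,c)$ by combining the variational formula for the renormalized volume (Proposition \ref{pr:variation2}) with the Nehari-type estimate (Corollary \ref{cr:nehari}) and a known upper bound on the sectional curvature of the Weil-Petersson metric. Throughout I fix the lower conformal structure at $c_0$ and regard $c_0$ as fixed, so that for each $c\in D$ the manifold $M=M(c_0,c)$ is the quasifuchsian manifold obtained by Bers double uniformization, and $V_R(c_0,c)$ is its renormalized volume. The key point is that, for a holomorphic disk $D$, the metric $q$ induced on $D$ by $g_{WP}$ is itself a Kähler metric of Weil-Petersson type on a one-complex-dimensional submanifold, and I may compute its Laplacian and curvature accordingly.

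First I would dispatch item $(1)$. Since $D$ is a holomorphic disk immersed in $\cT_S$, the induced metric $q$ is a Kähler metric on a complex curve, and its Gaussian curvature equals the holomorphic sectional curvature of $g_{WP}$ in the direction tangent to $D$. The holomorphic sectional curvature of the Weil-Petersson metric is strictly negative everywhere (a well-known fact, to be cited), so $q$ has negative curvature; this is essentially the input from the ``well-known upper bound on the curvature of $g_{WP}$'' mentioned in the section's opening.

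For item $(2)$, I would differentiate $u$ along $D$. By Proposition \ref{pr:variation2}, with $c_-=c_0$ fixed so that $c_-'=0$, the differential of $u$ at $c$ is $du = \tfrac{1}{4}\langle Re(q_+), \cdot\rangle$, where $q_+\in Q_c$ is the holomorphic quadratic differential associated by the Bers embedding to the pair $(c_0,c)$. Thus $du$ is, up to the factor $1/4$, the real part of an element of the cotangent space $Q_c$, and $\|du\|_q$ is controlled by the Weil-Petersson ($L^2$) norm of $q_+$. Corollary \ref{cr:nehari} bounds this norm by $12\sqrt{\pi(g-1)}$; the factor $1/4$ then yields $\|du\|_q \leq 3\sqrt{\pi(g-1)}$, after checking that the duality pairing against a unit-norm Weil-Petersson tangent vector is estimated by the Weil-Petersson norm of the covector (and being careful that taking the real part does not increase the norm). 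The main obstacle here is purely bookkeeping: tracking the identification of $Q_c$ with the cotangent space, the relation between the Hermitian and real $L^2$ norms, and the precise constant, so that the factors of $4$ and the $12$ versus $3$ align correctly.

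Item $(3)$ is the most delicate and is where I expect the real work to lie. The claim $\Delta_q u = -2$ should follow from the fact that the renormalized volume, as a function on Teichmüller space, has a second variation governed by the Weil-Petersson metric: the key analytic property of $V_R$ is that it is a Kähler potential (up to sign and constants) for $g_{WP}$, so that $i\partial\bar\partial V_R$ is a fixed multiple of the Weil-Petersson Kähler form. Concretely, I would use that the variation of $du=\tfrac14\langle Re(q_+),\cdot\rangle$ is controlled by the derivative of the Bers embedding at the origin, which is the identity on $Q_c$ up to the standard normalization, together with the relation $\II^*_0=-Re(q)$. Restricting the complex Hessian of $V_R$ to the holomorphic disk $D$ and using that on a one-dimensional complex submanifold the Riemannian Laplacian $\Delta_q$ equals four times the $\partial\bar\partial$-Laplacian, I would extract the constant $-2$. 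The obstacle is making the constant come out exactly $-2$: this requires pinning down the precise normalization in the identification $\II^*_0=-Re(q)$, the factor relating $dV_R$ to the Weil-Petersson form, and the conventions for $\Delta_q$ on a Kähler curve (recalling that here $\Delta$ is the geometer's Laplacian, non-positive at maxima). I would therefore carry out the computation in a local holomorphic coordinate adapted to $D$, writing $q = \rho\,|dz|^2$ and $u$ as a function of $z$, and verify $\Delta_q u = \rho^{-1}(u_{xx}+u_{yy}) = -2$ directly from the second variation formula, treating the matching of constants as the decisive step.
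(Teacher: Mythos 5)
Your proposal follows essentially the same route as the paper's proof: negative sectional curvature of the Weil-Petersson metric plus the Gauss equation for (1), Proposition \ref{pr:variation2} combined with Corollary \ref{cr:nehari} (with the factor $12/4=3$) for (2), and the K\"ahler-potential identity $2\partial\overline{\partial}V_R(c_0,\cdot)=i\omega_{WP}$ restricted to the holomorphic disk for (3). The only slip is in (1): for a holomorphic curve in a K\"ahler manifold the induced curvature is at most, not equal to, the ambient holomorphic sectional curvature (the second fundamental form only decreases it), but this inequality is all that is needed for negativity.
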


\begin{proof}
The first point follows from the fact that the Weil-Petersson metric on $\cT_S$ has
negative sectional curvature \cite{wolpert:chern} and from the Gauss formula, which
indicates that the curvature of a holomorphic disk in a K\"ahler manifold is at most equal
to the sectional curvature of the ambiant metric on its tangent space.

The second point follows from Proposition \ref{pr:variation} and from Corollary
\ref{cr:nehari}.

For the third point recall that the renormalized volume $V_R(c_-,\cdot)$, considered
as a function on $\cT_S$, is a K\"ahler potential for the Weil-Petersson metric on $cT_S$, see e.g.
\cite{TZ-schottky,takhtajan-teo,volume,review}.
In other terms:
$$ 2\partial \overline{\partial} V_R(c_-,\cdot) = i\omega_{WP}~. $$
But the restriction of $\partial \overline{\partial}$ to $D$ is $\partial \overline{\partial}$,
so equal to $-(1/4)\Delta_q$, where $\Delta_q$ is the Laplace operator of $(D,q)$.
\end{proof}

\begin{lemma} \label{lm:disk}
There is continuous, increasing function $\phi:[0,1)\rightarrow \R_{\geq 0}$ with $\phi(0)=0$ and
$\lim_1\phi=\infty$ as follows.
Let $(D,q)$ be a Riemannian disk of center $c_0$, and let $u:D\rightarrow \R$ be a smooth function. 
Suppose that:
\begin{itemize}
\item the radius of $(D,q)$ is $\phi(\delta)$,
\item $\Delta_q u=1$,
\item the curvature $K_q$ of $q$ is in $[-1,0]$.
\end{itemize}
Then there is a point $x\in D$ where $\| du\|_q\geq \delta$.
\end{lemma}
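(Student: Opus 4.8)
The plan is to take $\phi(\delta)=\log\frac{1+\delta}{1-\delta}$, so that $\tanh(\phi(\delta)/2)=\delta$; this function is smooth and increasing on $[0,1)$ with $\phi(0)=0$ and $\lim_{\delta\to 1}\phi(\delta)=+\infty$ (and incidentally $\phi'(0)=2$, as needed in Theorem \ref{tm:disk}). Write $R=\phi(\delta)$ for the radius of $(D,q)$ and recall that the paper's $\Delta_q$ is $-\mathrm{div}\,\mathrm{grad}$, so that the hypothesis $\Delta_q u=1$ reads $\mathrm{div}\,\mathrm{grad}\,u=-1$. For $r\le R$ let $B_r$ be the geodesic ball of radius $r$ about $c_0$, with area $A(r)$ and boundary length $L(r)$. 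The divergence theorem gives $\int_{\partial B_r}\langle \mathrm{grad}\,u,\nu\rangle\,ds=-A(r)$, whence $A(r)\le \int_{\partial B_r}\|du\|_q\,ds\le L(r)\,\max_{\partial B_r}\|du\|_q$, and therefore $\max_{\partial B_r}\|du\|_q\ge A(r)/L(r)$. It thus suffices to prove the sharp comparison $A(r)/L(r)\ge \tanh(r/2)$ and then let $r\uparrow R$: since $\tanh(R/2)=\delta$, compactness of $\overline{B_R}$ and continuity of $\|du\|_q$ produce a point $x$ with $\|du\|_q(x)\ge\delta$.

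To establish $A(r)/L(r)\ge\tanh(r/2)$ I would work in geodesic polar coordinates $(r,\theta)$ about $c_0$, in which $q=dr^2+J(r,\theta)^2\,d\theta^2$ with $\partial_r^2 J+KJ=0$, $J(0,\theta)=0$, $\partial_r J(0,\theta)=1$. Because $K\le 0$ one has $\partial_r^2 J=-KJ\ge 0$, so each $J(\cdot,\theta)$ stays positive for $r>0$; this is precisely what guarantees that the $B_r$ are genuine embedded disks (no conjugate points or cut locus inside radius $R$) and that these coordinates are valid. With $a(r,\theta)=\int_0^r J(s,\theta)\,ds$ we have $A(r)=\int a\,d\theta$ and $L(r)=\int J\,d\theta$, so it is enough to prove the pointwise bound $p(r,\theta):=a/J\ge\tanh(r/2)$ and integrate in $\theta$. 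Now $p>0$ and $p'=1-p\,(\partial_r J/J)$, while the logarithmic derivative $\eta=\partial_r J/J$ obeys the Riccati equation $\eta'=-\eta^2-K\le -\eta^2+1$ with $\eta\sim 1/r$ near $0$; comparison with $\coth r$ (the exact solution for $K\equiv-1$) yields $\eta\le\coth r$, i.e. the usual Laplacian comparison. Hence $p'\ge 1-p\coth r$, whereas $p_0(r)=\tanh(r/2)$ solves $p_0'=1-p_0\coth r$ exactly. Setting $\psi=p-p_0$, the inequality $\psi'+(\coth r)\psi\ge 0$ is just $(\psi\sinh r)'\ge 0$; since $\psi\sinh r\to 0$ as $r\to 0^+$, we get $\psi\ge 0$, i.e. $p\ge\tanh(r/2)$.

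Integrating $a\ge\tanh(r/2)\,J$ over $\theta$ then gives $A(r)\ge\tanh(r/2)\,L(r)$, completing the argument. I expect the main obstacle to be exactly this comparison $A/L\ge\tanh(r/2)$. The naive route through the global identity $L'(r)=2\pi-\int_{B_r}K\,da$ couples $A$ and $L$ in a way that does not close up, since relative to the hyperbolic model one has both $A\le A_0$ and $L\le L_0$, so the ratio is not controlled termwise; and a crude averaging of $u$ along a single radial geodesic loses a definite factor and fails to reach $\delta$. Localizing the estimate to each geodesic ray, so that it reduces to the one–dimensional Riccati comparison above, is what makes the bound sharp, with the constant curvature $-1$ metric as the extremal case. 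It is worth stressing that the two curvature bounds enter for genuinely different reasons: $K\ge -1$ produces $\eta\le\coth r$, while $K\le 0$ ensures $J>0$ and hence the legitimacy of the polar coordinates and of the divergence theorem on the disks $B_r$.
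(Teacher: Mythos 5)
Your proof is correct, and in its key step it takes a genuinely different --- and sharper --- route than the paper's. Both arguments share the same skeleton: by the divergence theorem the flux of $\mathrm{grad}\, u$ through $\partial B(r)$ equals $\pm A(r)$, so some point of $\partial B(r)$ has $\|du\|_q \geq A(r)/L(r)$, and everything reduces to a lower bound on the ratio $A/L$. The paper bounds this ratio globally: setting $y=A/L$, it combines $A'=L$, $L'(r)=2\pi-\int_{B(r)}K\,da$ (Gauss--Bonnet) with the two consequences $A(r)\geq \pi r^2$ (from $K\leq 0$) and $\overline{K}(r)\geq -1$ to get the Riccati inequality $y'\geq 1-(1+2/r^2)y^2$, and takes $\phi$ to be the inverse of the non-explicit solution $y_0$ of the equality case, with $\phi(0)=0$, $\phi'(0)=2$, $\lim_1\phi=\infty$ extracted afterwards by asymptotic analysis of that ODE. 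You instead run the comparison fiberwise in geodesic polar coordinates: the Laplacian comparison $\partial_r J/J\leq \coth r$ (from $K\geq -1$, with $K\leq 0$ guaranteeing $J>0$ and hence the validity of the coordinates) gives $a/J\geq \tanh(r/2)$ along each ray, hence $A/L\geq \tanh(r/2)$, which is exactly the hyperbolic-plane value and therefore sharp. This buys an explicit $\phi(\delta)=\log\frac{1+\delta}{1-\delta}$, which is pointwise smaller than the paper's $y_0^{-1}$ (indeed $\tanh(r/2)$ is a supersolution of the paper's Riccati equation, so $\tanh(r/2)\geq y_0(r)$); your version of the lemma is thus strictly stronger, and the properties $\phi(0)=0$, $\phi'(0)=2$, $\lim_1\phi=\infty$ are read off directly. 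Two further remarks. First, your flux formulation quietly repairs a small inaccuracy in the paper's write-up: the paper differentiates the boundary mean $\overline{u}(r)$ and asserts $\overline{u}'(r)=\frac{1}{L(r)}\int_{\partial B(r)}du(n)$, which neglects the variation of the length element (a covariance term with the geodesic curvature of the circles); only the integrated flux identity is needed, and that is what you use. Second, your side remark that the global route ``does not close up'' slightly undersells it --- the paper does close it, via $A\geq\pi r^2$ --- it merely lands on a weaker, implicit $\phi$. Both proofs rest on the same benign implicit assumptions (good behavior of the geodesic balls $B(r)$, $r\leq R$, and attainment of the bound at $r=R$); you in fact justify the validity of polar coordinates, where the paper only asserts convexity of $B(r)$.
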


The precise value of the function $\phi$ can be obtained by solving a differential equation. 

\begin{proof}
For $r\in (0,R]$ the geodesic disk $B(r)$ of center $c_0$ and radius $r$ is convex. We denote by $A(r)$ 
its area, by $L(r)$ the length of its boundary, by $\kappa(r)$ the total curvature of its boundary, 
and by $\Kb(r)$ the mean of its curvature. By definition, $\Kb(r)\in [-1,0]$ for all $r\in (0,R]$. 
Moreover:
\begin{itemize}
\item $A'(r)=L(r)$,
\item $L'(r)=\kappa(r)$,
\item $A(r)\Kb(r)=2\pi-\kappa(r)$ by the Gauss-Bonnet theorem.
\end{itemize}
It follows that 
$$ L'(r)= 2\pi-\Kb(r)A(r)~. $$

Let $y(r)=A(r)/L(r)$. Then 
\begin{eqnarray*}
y'(r) & = & \frac{A'(r)L(r)-A(r)L'(r)}{L(r)^2} = \frac{L(r)^2-A(r)(2\pi-A(r)\Kb(r))}{L(r)^2} \\
& = & 1+\left(\Kb(r)-\frac{2\pi}{A(r)}\right)y(r)^2~.
\end{eqnarray*}
The initial condition is $\lim_0 y=0$ since $A(r)\sim \pi r^2$ and $L(r)\sim 2\pi r$ at $0$.
Since $q$ has curvature in $[-1,0]$, $A(r)\geq \pi r^2$ and $\Kb(r)\geq -1$ for all $r$, so that
\begin{equation}
  \label{eq:riccati}
   y'(r) \geq 1 - \left(1+\frac{2}{r^2}\right)y(r)^2~. 
\end{equation}
So $y(r)\geq y_0(r)$, where $y_0$ is the solution vanishing at $0$ of the
equation obtaining by taking the equality in (\ref{eq:riccati}). 

Let $\ub(r)$ be the mean of $u$ over $\partial B(r)$. Then 
$$ \ub'(r) = \frac{1}{L(r)}\int_{\partial B(r)} du(n) = \frac{1}{L(r)}\int_{B(r)} \Delta_q uda =
\frac{A(r)}{L(r)} = y(r)~. $$
It follows that there exists a point at distance $r$ from $c$ where $\partial u/\partial r\geq y(r)$,
and therefore where $\| du\|_q\geq y(r)\geq y_0(r)$. 

The lemma follows, with $\phi$ equal to the reciprocal of $y_0$.
\end{proof}

\begin{cor} \label{cr:disk}
Let $\Delta_0,k,\delta>0$.
Let $(D,q)$ be a Riemannian disk of center $c_0$, and let $u:D\rightarrow \R$ be a smooth function. 
Suppose that:
\begin{itemize}
\item the radius of $(D,q)$ is $\phi(k^2\delta/\Delta_0)/k$,
\item $\Delta_q u=\Delta_0$,
\item the curvature $K_q$ of $q$ is in $[-k^2,0]$.
\end{itemize}
Then there is a point $x\in D$ where $\| du\|_q\geq \delta$.
\end{cor}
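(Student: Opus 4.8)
The plan is to deduce Corollary \ref{cr:disk} from Lemma \ref{lm:disk} by a simple rescaling argument, reducing the general case of curvature in $[-k^2,0]$ and $\Delta_q u=\Delta_0$ to the normalized case of curvature in $[-1,0]$ and $\Delta_q u=1$ treated in the lemma. The two parameters $k$ and $\Delta_0$ should be absorbed separately: $k$ by scaling the metric, and $\Delta_0$ by scaling the function $u$.

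First I would set $\tilde q = k^2 q$. Under this conformal (indeed homothetic) rescaling, lengths scale by $k$, so the radius of $(D,\tilde q)$ is $k$ times the radius of $(D,q)$; the curvature scales as $K_{\tilde q}=k^{-2}K_q$, so $K_{\tilde q}\in[-1,0]$; and the Laplacian scales as $\Delta_{\tilde q}=k^{-2}\Delta_q$, so that $\Delta_{\tilde q}u = k^{-2}\Delta_0$. Next I would replace $u$ by $\tilde u = (k^2/\Delta_0)u$, which normalizes the source term: $\Delta_{\tilde q}\tilde u = (k^2/\Delta_0)\cdot k^{-2}\Delta_0 = 1$. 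Under this same scaling the norm of the differential transforms as $\|d\tilde u\|_{\tilde q} = (k^2/\Delta_0)\,k^{-1}\|du\|_q = (k/\Delta_0)\|du\|_q$, since $\|\cdot\|_{\tilde q}=k^{-1}\|\cdot\|_q$ on $1$-forms.

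I would then apply Lemma \ref{lm:disk} to the normalized data $(D,\tilde q)$, $\tilde u$, with the value $\delta$ in the lemma taken to be $k^2\delta/\Delta_0$. The hypothesis on the radius needed in the lemma is that the radius of $(D,\tilde q)$ equals $\phi(k^2\delta/\Delta_0)$; since the radius of $(D,\tilde q)$ is $k$ times the radius of $(D,q)$, and the latter is assumed to be $\phi(k^2\delta/\Delta_0)/k$, this matches exactly. The lemma then produces a point $x\in D$ where $\|d\tilde u\|_{\tilde q}\geq k^2\delta/\Delta_0$. Translating back via $\|d\tilde u\|_{\tilde q}=(k/\Delta_0)\|du\|_q$ gives $(k/\Delta_0)\|du\|_q\geq k^2\delta/\Delta_0$ at $x$, that is $\|du\|_q\geq k\delta\geq\delta$ — in fact the clean bound is $\|du\|_q\geq k\delta$, and one recovers at least $\delta$ provided $k\geq 1$, so I would double-check the intended normalization of the constant to make the bookkeeping exact.

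The only delicate point is keeping the three scaling exponents consistent: the radius picks up one power of $k$, the curvature and the Laplacian each pick up two inverse powers, and the gradient norm one inverse power. None of these is hard, but the argument lives or dies on getting every exponent right so that the argument of $\phi$, namely $k^2\delta/\Delta_0$, comes out identical on both sides. I expect no genuine obstacle; this is a routine dimensional-analysis verification, and the substance of the result is entirely contained in Lemma \ref{lm:disk}.
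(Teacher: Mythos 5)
Your rescaling is exactly the paper's own proof: the paper's entire argument reads ``the statement is obtained by scaling the metric $q$ by a factor $k^2$ and the function $u$ by a factor $k^2/\Delta_0$ in Lemma \ref{lm:disk}'', which is precisely what you carried out, and your bookkeeping of the exponents (radius by $k$, curvature and Laplacian by $k^{-2}$, gradient norm by $k^{-1}$) is correct.

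Moreover, the factor-of-$k$ discrepancy you flagged is real, and it is not a slip on your side: with the printed radius $\phi(k^2\delta/\Delta_0)/k$, the scaling argument only produces a point where $\|du\|_q\geq k\delta$, which recovers the stated conclusion when $k\geq 1$ but not when $k<1$ (the case of interest in the application). In fact, for $k<1$ the corollary as printed is false. Take $q$ flat on a round disk of the stated radius $R=\phi(k^2\delta/\Delta_0)/k$ (flat is allowed since $0\in[-k^2,0]$) and $u=\pm\Delta_0|x|^2/4$, so that $\Delta_q u=\Delta_0$ and $\|du\|_q=\Delta_0 r/2\leq \Delta_0 R/2$ on the disk. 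The comparison in the proof of Lemma \ref{lm:disk} gives $y_0(r)\leq r/2$, hence $\phi(x)\geq 2x$ and $\phi(x)/x\to 2$ as $x\to 0$; consequently $\Delta_0R/2=\Delta_0\phi(k^2\delta/\Delta_0)/(2k)<\delta$ whenever $k^2\delta/\Delta_0$ is small enough, so no point of the disk satisfies $\|du\|_q\geq\delta$. The statement your argument actually proves has radius $\phi(k\delta/\Delta_0)/k$ (with the implicit condition $k\delta/\Delta_0<1$), or equivalently keeps the printed radius and weakens the conclusion to $\|du\|_q\geq k\delta$. This single power of $k$ propagates to Theorem \ref{tm:disk}, whose hypothesis and radius should read $3k\sqrt{\pi(g-1)}<2$ and $\phi\bigl(3k\sqrt{\pi(g-1)}/2\bigr)/k$ respectively. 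In short: same approach as the paper, correctly executed; the inconsistency you asked to double-check is an error in the paper's statement, not in your proof.
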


\begin{proof}
The statement is obtained by scaling the metric $q$ by a factor $k^2$ and the function $u$ by a factor $k^2/\Delta_0$ 
in Lemma \ref{lm:disk}.
\end{proof}

\begin{proof}[Proof of Theorem \ref{tm:disk}]
It follows directly from Lemma \ref{lm:laplacien} and from Corollary \ref{cr:disk}.
The function $\phi$ is inverse function of the solution of the differential equation
$$ y'(r) = 1 - \left(1+\frac{2}{r^2}\right)y(r)^2 $$
which vanishes at $0$.  
An easy asymptotic analysis shows that $\lim_\infty y=1$, while $y'(0)=1/2$. It follows
that $\phi$ is defined on $[0,1)$ with $\lim_1 \phi=\infty$, and that $\phi'(0)=2$.
\end{proof}

\bibliographystyle{plain}

\def\cprime{$'$} \def\cprime{$'$}

\end{document}